\definecolor{rouge}{rgb}{0.7,0.00,0.00}
\definecolor{vert}{rgb}{0.00,0.5,0.00}
\definecolor{bleu}{rgb}{0.00,0.00,0.8}
\newtheorem{theorem}{Theorem}[section]
\newtheorem{lemma}[theorem]{Lemma}
\newtheorem{corollary}[theorem]{Corollary}
\newtheorem{remark}[theorem]{Remark}
\newtheorem{proposition}[theorem]{Proposition}
\numberwithin{equation}{section}
\def\geq{\geqslant}
\def\leq{\leqslant}
\def\p{\mathbb{P}}
\def\e{\mathbb{E}}
\def\({\left(}
\def\){\right)}
\begin{document}
\title[Harmonic moments of $Z_n$ for a BPRE]{Harmonic moments and large deviations for a supercritical branching process in a random environment}

\author{Ion~Grama}
\curraddr[Grama, I.]{ Universit\'{e} de Bretagne-Sud, LMBA UMR CNRS 6205,
Vannes, France}
\email{ion.grama@univ-ubs.fr}

\author{Quansheng Liu}
\curraddr[Liu, Q.]{ Universit\'{e} de Bretagne-Sud, LMBA UMR CNRS 6205,
Vannes, France}
\email{quansheng.liu@univ-ubs.fr}

\author{Eric Miqueu}
\curraddr[Miqueu, E.]{Universit\'{e} de Bretagne-Sud, LMBA UMR CNRS 6205,
Vannes, France}
\email{eric.miqueu@univ-ubs.fr}
\date{\today }
\subjclass[2010]{ Primary 60J80, 60K37, 60J05. Secondary 60J85, 92D25. }
\keywords{Branching processes, random environment, harmonic moments, large deviations, phase transitions, central limit theorem}

\begin{abstract}
Let $(Z_n)$ be a supercritical branching process in an independent and identically distributed random environment $\xi$.
We study the asymptotic of the harmonic moments $\mathbb{E}\left[Z_n^{-r} | Z_0=k \right]$ of order $r>0$ as $n \to \infty$. We  exhibit a phase transition with the critical value $r_k>0$
determined by the equation
$\mathbb E p_1^k = \mathbb E m_0^{-r_k},$ where  $m_0=\sum_{k=0}^\infty k p_k$ with $p_k=\mathbb P(Z_1=k | \xi),$ 
assuming that $p_0=0.$ 
Contrary to the constant environment case (the Galton-Watson case), this critical value is different from that for the existence of the harmonic moments of $W=\lim_{n\to\infty} Z_n / \mathbb E (Z_n|\xi).$ The aforementioned phase transition  is linked to that for the rate function of the lower large deviation for $Z_n$.
As an application, we obtain a lower large deviation result for $Z_n$ under weaker conditions than in previous works and give a new expression of the rate function.
We also improve an earlier result about the convergence rate in the central limit theorem for $W-W_n,$ and find an equivalence for the large deviation probabilities of the ratio $Z_{n+1} / Z_n$.
\end{abstract}

\maketitle

\section{Introduction}

A branching process in a random environment (BPRE) is a natural and important generalisation
of the Galton-Watson process, where the reproduction law varies according to a random environment indexed by time.
It was introduced for the first time in Smith and Wilkinson \cite%
{smith} to modelize the growth of a population submitted to an environment.
For background concepts and basic results concerning a BPRE we refer to 
Athreya and Karlin \cite{athreya1971branching, athreya1971branching2}.
In the critical and subcritical regime the branching process goes out and the research interest has been mostly concentrated  on the survival probability
and conditional limit theorems,
see e.g. Afanasyev, B\"oinghoff, Kersting, Vatutin \cite{afanasyev2012limit, afanasyev2014conditional}, Vatutin \cite{Va2010},  Vatutin and Zheng \cite{VaZheng2012}, and the references therein.
In the supercritical
case, a great deal of current research has been focused on large deviations, 
see e.g. Bansaye and Berestycki \cite{bansaye2009large}, Bansaye and B\"oinghoff \cite{bansaye2011upper, bansaye2013lower, bansaye2014small}, B\"oinghoff and Kersting \cite{boinghoff2010upper}, Huang and Liu \cite{liu} and Nakashima \cite{Nakashima2013lower}. 
In the particular case when the offspring distribution has a fractional linear generating function, precise asymptotics can be found in 
B\"oinghoff \cite{boinghoff2014limit} and Kozlov \cite{kozlov2006large}.
An important closely linked issue is the asymptotic behavior of the harmonic moments $\mathbb{E} [Z_n^{-r} | Z_0 = k]$ of the process $Z_n$ starting with $Z_0=k$ initial individuals. For the Galton-Watson process which corresponds to the constant environment case, the question has been studied exhaustively in Ney and Vidyashankar \cite{ney2003harmonic}.
For a BPRE, it has only been partially treated  in \cite[Theorem 1.3]{liu}.

In the present paper, we give a complete description of the asymptotic behavior of the harmonic moments $\mathbb{E}_k [Z_n^{-r} ] = \mathbb{E} [Z_n^{-r} | Z_0 = k]$ of the process $Z_n$ starting with $k$ individuals and assuming that each individual gives birth to at least one offspring (non-extinction case). 
As a consequence, we improve the lower large deviation result for the process $Z_n$ obtained in \cite[Theorem 3.1]{bansaye2013lower} by relaxing the hypothesis therein. 
In the meanwhile we give a new characterization of the rate function in the large deviation result stated in \cite{bansaye2013lower}. 
We also improve the exponential convergence rate in the central limit theorem for $W-W_n$ established in \cite{liu}.
Furthermore, we investigate 
the large deviation behavior of the ratio $R_n = \frac{Z_{n+1}}{Z_n}$, i.e. the asymptotic of the large deviation probability $ \p ( | R_n - m_n|  > a ) $ for $a>0$, where $m_n$ is the expected value of the number of children of an individual in generation $n$ given the environment $\xi$. For the Galton-Watson process, the quantity $R_n$ is the Lotka-Nagaev estimator of the mean $\e Z_1$, whose large deviation probability has been studied in \cite{ney2003harmonic}.  

Let us explain briefly the findings of the paper in the special case when we start with $Z_0=1$ individual. Assume that $\p (Z_1=0)=0$
and $\p (Z_1=1)>0$. Define $r_1$ as the solution of the equation 
\begin{equation}
\label{eq rk}
\e m_0^{-r_1} = \gamma,
\end{equation}
with
\begin{equation}
\gamma =\p (Z_1=1). 
\end{equation} 
From Theorem \ref{thm harmonic moments Zn} we get the following asymptotic behavior of the harmonic moments $\e \left[ Z_n^{-r} \right]$ for $r>0$.  Assume that $\e m_0^{r_1+ \varepsilon} < \infty$ for some $\varepsilon>0$. Then, we have
\begin{equation}
\label{equivalent Z_n intro}
\left\{ \begin{array}{l c l l}
\displaystyle{\frac{ \mathbb{E} \left[ Z_n^{-r} \right] }{\gamma^n}} & \underset{n \to \infty}{\longrightarrow} & 
C(r) & \quad \text{if} \ \ r > r_1 , \\
& & &\\
\displaystyle{\frac{ \mathbb{E} \left[ Z_n^{-r} \right]}{n \gamma^n}} &\underset{n \to \infty}{\longrightarrow}& 
C(r)   & \quad \text{if} \ \ r = r_1, \\
& & &\\
\displaystyle{\frac{ \mathbb{E} \left[ Z_n^{-r} \right]}{\left(\e m_0^{-r} \right)^n}} &\underset{n \to \infty}{\longrightarrow}& 
C(r)  & \quad \text{if} \ \ r < r_1, \\
\end{array}
\right.
\end{equation}
where $C(r)$ are positive constants for which we find integral expressions.
This shows that there are three phase transitions in the rate of convergence of the harmonic moments for the process $Z_n$, with the critical value $r_1$.
It generalizes the result of  \cite{ney2003harmonic} for the Galton-Watson process. 
For a BPRE, it completes and improves the result of \cite{liu}, where the asymptotic equivalent of the quantity $\mathbb{E} \left[ Z_n^{-r} \right]$ 
has been established in the particular case where $r<r_1$ and under stronger assumptions.

The proof presented here is new and straightforward compared to that for the Galton-Watson process given in \cite{ney2003harmonic}. Indeed, we prove \eqref{equivalent Z_n intro} starting from the branching property 
\begin{equation}
\label{decomposition Zn1}
Z_{n+m} = \sum_{i=1}^{Z_m} Z_{n,i}^{(m)},
\end{equation}
where conditionally on the environment $\xi$, for $i \geq 1$, the sequences of random variables $\{ Z_{n,i}^{(m)} : n \geq 0 \}$ are i.i.d. branching processes with the shifted environment $T^m (\xi_0, \xi_1 , \ldots ) = (\xi_m, \xi_{m+1} , \ldots )$, and are also independent of $Z_m$.
This simple idea leads to the following equation which will play a key role in our arguments:
\begin{equation}
\label{eq inequation Zn-r}
\mathbb{E} \left[ Z_{n+1}^{-r} \right] = \gamma^{n+1}  + \sum_{j=0}^{n} b_j \gamma^{n-j}  c_r^j,
\end{equation}
where $c_r= \e m_0^{-r}$ and $(b_j)_{j \geq 0}$ is an increasing and bounded sequence.
Such a relation highlights the main role played by the quantities $\gamma$ and $c_r$ in the asymptotic study of $\e [Z_n^{-r}]$ whose behavior depends on whether $\gamma < c_r$, $\gamma = c_r$ or $\gamma > c_r$.
Note that the complete proof of \eqref{equivalent Z_n intro} relies on some recent and important results established in \cite{glm2016asymptotic} concerning the critical value for the existence of the harmonic moments of the r.v. $W$ and the asymptotic behavior of the distribution $\p (Z_n = j)$ as $n \to \infty$, with $ j \geq 1$.
For the Galton-Watson process, our approach based on \eqref{eq inequation Zn-r} is much simpler than that in \cite{ney2003harmonic}.

Our proof also gives an expression of the limit constants in \eqref{equivalent Z_n intro}. For the Galton-Watson process,   it recovers the expressions of \cite[Theorem 1]{ney2003harmonic} in the cases where $\gamma > c_r$ and $ \gamma < c_r$. In the critical case where $r=r_1$, the limit constant obtained in this paper is different to that of \cite[Theorem 1]{ney2003harmonic}, which leads to an alternative expression of the constant and the following nice identity involving the well-known functions $G$, $Q$ and $\phi$:  defining
\begin{eqnarray*}
G(t) &=& \sum_{k=0}^{\infty} t^k \p (Z_1=k), \\
Q (t) &=& \lim_{n \to \infty} \gamma^{-n} G^{\circ n} (t) , \\
\phi (t) &=& \e [ e^{-t W} ],
\end{eqnarray*}
and  denoting $m=\e [Z_1]$, $\gamma= \p (Z_1=1)$ and  $\bar G (t) = G(t) - \gamma t$, we have
\begin{equation}
\label{eq egalite intro}
\frac{1}{\gamma}  \int_{0}^{\infty} \bar{G}( \phi (u) )  u^{r-1} du = \int_{1}^{m} Q ( \phi (u) )  u^{r-1} du.
\end{equation}
For a BPRE, we will show a generalization of \eqref{eq egalite intro} in Proposition \ref{prop relation G Q psi BPRE}.

As a consequence of Theorem \ref{thm harmonic moments Zn} and of a version of the G\"artner-Ellis theorem, we obtain a lower large deviation result for $Z_n$ under conditions weaker than those in \cite[Theorem 3.1]{bansaye2013lower}.
Assume that $\p (Z_1=0)=0$ and $\e m_0^{r_1+ \varepsilon} < \infty$ for some $\varepsilon>0$.
Let 
\begin{equation}
\label{Lambda}
\Lambda ( \lambda )= \log \e e^{\lambda X}
\end{equation}
 be the log-Laplace transform of $X= \log m_0$ and
\begin{equation}
\label{Lambda*}
\Lambda^* (x)= \sup_{\lambda \leq 0 } \{\lambda x - \Lambda ( \lambda ) \}
\end{equation} 
be the Fenchel-Legendre transform of $\Lambda (\cdot)$.
Then, for any 
$\theta \in (0, \e [X])$, we have 
\begin{eqnarray}
\label{eq PGD GLM intro}
 \lim_{n \to \infty} -\frac{1}{n} \log \p \left(  Z_n \leq e^{\theta n }\right)  = \chi^* (\theta) \in (0, \infty),
\end{eqnarray}
where 
\begin{eqnarray}
\label{chi intro}
\chi^* (\theta)  
&=&
\left\{ 
\begin{array}{c l}
-r_1 \theta - \log \gamma & \text{if} \ 0 < \theta < \theta_1, \\ 
\Lambda^* (\theta) &  \text{if} \ \theta_1 \leq \theta < \e [X],
\end{array}
\right.
\end{eqnarray}
with
\begin{equation}
\label{theta_k intro}
\theta_1 = \Lambda ' (-r_1) \in (0, \e [X]).
\end{equation}
Equation \eqref{eq PGD GLM intro} improves the result of \cite[Theorem 3.1(ii)]{bansaye2013lower} in the case when $\p (Z_1=0)=0$, since it is assumed in \cite{bansaye2013lower} that $\e m_0^t< \infty$ for all $t>0$, whereas we only require that $\e m_0^{r_1+ \varepsilon} < \infty$ for some $\varepsilon>0$. Moreover, equations \eqref{chi intro} and \eqref{theta_k intro}  also give new and alternative expressions of the rate function and the critical value. In fact, it has been proved in \cite{bansaye2013lower} that, in the case when $\p (Z_1=0)=0$ and $Z_0=1$,
\begin{equation}
\lim_{n \to \infty} -\frac{1}{n} \log \p \left(  Z_n \leq e^{\theta n }\right)  = I (\theta) \in (0, \infty),
\end{equation}
with
\begin{eqnarray}
\label{eq chi thm000}
I (\theta)  
&=&
\left\{ 
\begin{array}{c l}
\rho \left(1 - \frac{\theta}{\theta_1^*} \right) + \frac{\theta}{\theta_1^*}
\Lambda^* (\theta_1^*) & \text{if} \ 0 < \theta < \theta_1^*, \\ 
\Lambda^* (\theta) &  \text{if} \ \theta_1^* \leq \theta < \e [X],
\end{array}
\right.
\end{eqnarray}
where $\rho = - \log \gamma $ and $\theta_1^* $ the unique solution on $(0, \e [X])$ of the equation 
\begin{equation}
\label{theta_* bansaye intro}
\frac{\rho- \Lambda^* (\theta_1^*)}{\theta_1^*} = \inf_{0 \leq \theta \leq \e [X]} \frac{\rho - \Lambda^* (\theta)}{\theta}.
\end{equation}
It follows directly from  the relations \eqref {chi intro} to \eqref{theta_* bansaye intro} that $\theta_1 = \theta_1^*$ and $\chi^* (\theta) = I (\theta)$ for all $\theta \in (0, \e [X])$.This fact can also be shown  by using simple duality arguments between $\Lambda$ and $\Lambda^*$, as will be seen in the next section.

The rest of the paper is organized as follows. In 
Section \ref{sec main results} we give the precise statements of the main theorems with applications. 
Section \ref{sec preuve main thm} is devoted to the proof of the main results, Theorems \ref{thm harmonic moments Zn} and \ref{thm PGD}. The proofs for the applications  are deferred to Section \ref{sec application}.

Throughout the paper, we denote by $C$ an absolute constant whose value may differ from line to line. 

\section{Main results}
\label{sec main results}

A BPRE $(Z_n)$ can be described as follows.
The random environment is
represented  by a sequence $\xi = (\xi_0, \xi_1 , ... ) $ of independent and
identically distributed random variables (i.i.d.\ r.v.'s) taking values in an abstract space $\Xi$, whose
realizations determine the probability generating functions
\begin{equation}
\mathnormal{f}_{\xi_n} (s)= f_n (s) = \sum_{i=0}^{\infty} p_i ( \xi_n ) s^i,
\quad s \in [0,1], \quad p_i ( \xi_n ) \geq 0, \quad \sum_{i=0}^{ \infty}
p_i (\xi_n) =1.  \label{defin001}
\end{equation}
The BPRE $(Z_n)_{n \geq 0}$ is defined by the relations
\begin{equation}  \label{relation recurrence Zn}
Z_0 = 1, \quad Z_{n+1} = \sum_{i=1}^{Z_n} N_{n, i}, \quad \text{for} \quad n
\geq 0,
\end{equation}
where the random variables $N_{n,i} $ $(i = 1, 2, \dots)$ represent the number of children of the $i$-th individual of the
generation $n$.
Conditionally on the environment $\xi $, the r.v.'s $N_{n,i} $
$(n \geq 0, i \geq 1)$ are independent of each other, and each  $N_{n,i} $
$( i \geq 1)$ has common probability generating function $\mathnormal{f}_n.$

In the sequel we denote by $\mathbb{P}_{\xi}$ the
\textit{quenched law}, i.e.\ the conditional probability when the
environment $\xi$ is given, and by $\tau $ the law of the environment $\xi$.
Then
$\mathbb{P}(dx,d\xi) = \mathbb{P}_{\xi}(dx) {\tau}(d\xi)$
is the total law of the process, called
\textit{annealed law}. The corresponding quenched and annealed expectations
are denoted respectively by $\mathbb{E}_{\xi}$ and $\mathbb{E}$.  We also denote by $\mathbb{P}_k$ and $\mathbb{E}_k$ the corresponding annealed probability and expectation starting with $Z_0=k$ individuals, with $\mathbb P_1 = \mathbb P$ and $\mathbb E_1 = \mathbb E.$   
From \eqref{relation recurrence Zn}, it follows that the  probability generating function of $Z_n$ conditionally on the environment $\xi$ is
given by
\begin{equation}
\label{gn}
g_n (t) = \e_{\xi} [ t^{Z_n} ]  =  f_0 \circ \ldots \circ \ f_{n-1} (t), \quad 0 \leq t \leq 1. 
\end{equation} 
Since $\xi_0, \xi_1 , \ldots$ are i.i.d. r.v.'s, we get that the annealed probability generating function $G_{k,n}$ of $Z_n$ starting with $Z_0=k$ individuals is given by
\begin{equation}
\label{Gkn}
G_{k,n} (t) = \mathbb{E}_k [t^{Z_n}] = \mathbb{E} \left[ g_n^k (t) \right], \quad 0 \leq t \leq 1. 
\end{equation}
We also
define, for $n\geq 0$,
\begin{equation*}
m_n = m ( \xi_n )= \sum_{i=0}^\infty i p_i ( \xi_n ) \quad \text{and} \ \ \Pi_n = \mathbb{E}_{\xi} Z_n = m_0 ... m_{n-1},
\end{equation*}
where $m_n $ represents the average number of children of an
individual of generation $n$  when the environment $\xi $ is given, and $\Pi_0=1$ by convention. Let
\begin{equation} \label{Wn}
W_n =\frac{Z_n}{\Pi_n} , \quad n\geq 0,
\end{equation}
be the normalized population size.
It is well known that under the quenched law $%
\mathbb{P}_{\xi}$, as well as under the annealed law $\p$, the sequence $(W_n)_{n \geq 0} $ is a non-negative martingale with respect to the filtration
$$\mathcal{F}_n = \sigma
\left(\xi, N_{k,i} , 0 \leq k \leq n-1, i = 1,2 \ldots \right), $$
where by convention $\mathcal{F}_0 = \sigma(\xi)$.
Then the limit $W = \lim W_n $ exists $\mathbb{P}$ - a.s. and $\mathbb{E} W \leq 1 $.
We also denote the quenched and annealed Laplace transform of $W$  by
\begin{equation}
\label{quenched annealed laplace W}
\phi_{\xi} (t) =  \mathbb{E}_{\xi} \left[ e^{-t W} \right] \quad  \text{and} \quad \phi (t) = \mathbb{E} \left[ e^{-t W} \right], \quad \text{for}\ t \geq 0,
\end{equation}
while starting with $Z_0=1$ individual. For $k\geq 1$, while starting with $Z_0=k$ individuals, we have
\begin{equation}
\label{annealed laplace W Z_0=k}
\phi_{k} (t) := \mathbb{E}_{k} \left[ e^{-t W}  \right] = \e [ \phi_{\xi}^k (t) ].
\end{equation}
Another important tool in the study  of a BPRE  is the associated random
walk
\begin{equation*}
S_n = \log \Pi_n = \sum_{i=1}^{n} X_i , \quad n \geq 1, 
\end{equation*}
where the r.v.'s  $X_i = \log m_{i-1}$ $(i\geq1)$ are i.i.d.\  depending only on the environment $\xi$.
For the sake of brevity, we set $X=  \log m_0 $ and 
\begin{equation*}
\mu = \mathbb{E} X . 
\end{equation*}
We shall consider a supercritical BPRE where $\mu \in (0, \infty)$, so that under the extra condition $\mathbb E |\log (1-p_0(\xi_0))| <\infty$ (see \cite{smith}), 
the population size tends to infinity with positive probability.
For our propose, in fact we will assume in the whole paper that each individual gives birth to at least one child, i.e. 
\begin{equation}
\label{condition p0=0}
p_0(\xi_0) = 0 \quad a.s.
\end{equation}
Therefore, under the condition 
\begin{equation}
\label{CN CV L1 W}
\mathbb{E} \frac{Z_1}{m_0} \log^+ Z_1  < \infty ,
\end{equation}
the martingale $(W_n)$  converges to $W$ in $L^1 (\mathbb{P})$ (see e.g. \cite{tanny1988necessary}) and
\[ 
\mathbb{P} (W>0) =  \mathbb{P} (Z_n \to \infty)=1. 
\]
Now we can state the main result of the paper about the asymptotic of the harmonic moments $\e_k [Z_n^{-r}]$ of the process $(Z_n)$ for $r>0$, starting with $Z_0=k$ for $k \geq 1$.
Define
\begin{equation}
\label{eq gamma_k}
\gamma_k = \mathbb{P}_k (Z_1=k) = \mathbb{E} [ p_1^k (\xi_0)]
\end{equation}
and 
\begin{equation}
\bar{G}_{k,1} (t) = {G}_{k,1} (t) - \gamma_k t^k = \sum_{j=k+1}^{\infty } t^j \p (Z_1=j),
\end{equation}
where $G_{k,1}$ is the generating function of $Z_1$ defined in \eqref{Gkn}.
Let $r_k$ be the solution of the equation
\begin{equation}
\label{equation rj}
\gamma_k = \mathbb{E} \left[ m_0^{-r_k} \right],
\end{equation}
with the convention that $r_k= + \infty $ if $p_1 (\xi_0) = 0$ a.s.
For any $r>0$, set
\begin{equation}
c_r = \mathbb{E} m_0^{-r}.
\end{equation}
For any $k \geq 1$ and $r>0$, let $\mathbb{P}_k^{(r)}$ be the probability measure (depending on $r$) defined, for any $\mathcal{F}_n$-measurable r.v. $T$, by 
\begin{equation}
\label{changement de mesure}
\mathbb{E}_k^{(r)} [T] =\frac{\mathbb{E}_k \left[ \Pi_n^{-r} T \right]}{c_r^n}.
\end{equation}
Set $\mathbb{P}^{(r)}=\mathbb{P}_1^{(r)}$ and $\mathbb{E}^{(r)}=\mathbb{E}_1^{(r)}.$
It is easily seen that under $\p^{(r)}$, the process $(Z_n)$ is still a supercritical branching process in a random environment with $\p^{(r)}(Z_1=0)=0$, and $(W_n)$ is still a non-negative martingale which converges a.s. to $W$. Moreover, by \eqref{CN CV L1 W} and the fact that $m_0 \geq 1$, we have
\[
\e^{(r)} \left[ \frac{Z_1}{m_0} \log Z_1 \right] = \e \left[ \frac{Z_1}{m_0^{1+r}} \log Z_1\right] / \e \left[ m_0^{-r}\right] \leq \e \left[ \frac{Z_1}{m_0} \log Z_1 \right] / \e \left[ m_0^{-r}\right] < \infty,
\]
which implies that
\begin{equation}
\label{cv L1 Pr W}
W_n \rightarrow W \quad \text{in} \quad L^1 (\p^{(r)}).
\end{equation}
Then we have the following equivalent for the harmonic moments of $Z_n$. Let
\begin{equation}
\label{An}
A_{k,n} (r) = \left\{ \begin{matrix}
\gamma_k^{n}, & \ \text{if} \ \ r > r_k , \\
n \gamma_k^{n} , & \ \text{if} \ \ r = r_k, \\
c_r^{n} , & \ \text{if} \ \ r < r_k. \\
\end{matrix}
\right.
\end{equation}
\begin{theorem}
\label{thm harmonic moments Zn}
Let $k \geq 1$ and assume that $\e m_0^{r_k+ \varepsilon} < + \infty$ for some $\varepsilon>0$. Then we have 
\begin{equation}
\label{equivalent Z_n}
\lim_{n \to \infty}  \frac{\mathbb{E}_k \left[ Z_n^{-r} \right]}{A_{k,n} ( r)} = C(k, r) := \left\{ \begin{array}{l l}
\displaystyle{\frac{1}{\Gamma (r)} \int_0^{\infty} Q_k ( e^{-t} ) t^{r-1} dt} & \ \text{if}  \ r > r_k , \\
& \\
\displaystyle{\frac{\gamma_k^{-1} }{\Gamma (r)} \mathbb{E}^{(r)} \left[ \int_{0}^{\infty} \bar{G}_{k,1} ( \phi_{\xi} (t) )  t^{r-1} dt \right] } & \ \text{if}  \ r = r_k, \\
& \\
\displaystyle{ \frac{1}{\Gamma (r)} \int_0^{\infty} \phi_k^{(r)} (t) t^{r-1} dt } & \ \text{if} \  r < r_k, \\
\end{array}
\right.
\end{equation}
where $C(k,r) \in (0, \infty)$, $\Gamma(r)=\int_0^\infty x^{r-1}e^{-x}dx$ is the Gamma function, and $\phi_k^{(r)} (t)= \mathbb{E}_k^{(r)} [e^{-tW}]$ is the Laplace transform of $W$ under $\mathbb{P}_k^{(r)}$.
\end{theorem}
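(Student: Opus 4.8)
The plan is to exploit the branching decomposition \eqref{decomposition Zn1} with $m=1$, which gives $Z_{n+1} = \sum_{i=1}^{Z_1} Z_{n,i}^{(1)}$, where conditionally on $\xi$ the $Z_{n,i}^{(1)}$ are i.i.d.\ copies of a BPRE in the shifted environment $T\xi$. Starting from $Z_0=k$ we have $Z_1 \geq k$ a.s.\ by \eqref{condition p0=0}. The first step is to write, for $r>0$, the identity
\[
\e_k[Z_{n+1}^{-r}] = \gamma_k\, \e\bigl[g_n^{-r}(\ \cdot\ )\bigr]\Big|_{k} + \e\Bigl[\sum_{j=k+1}^\infty \p(Z_1=j\mid\xi_0)\,\e_\xi\bigl[(Z_{n,1}^{(1)}+\dots+Z_{n,j}^{(1)})^{-r}\bigr]\Bigr],
\]
and to use the elementary Gamma-function representation $x^{-r} = \frac{1}{\Gamma(r)}\int_0^\infty e^{-tx} t^{r-1}\,dt$ to linearize the negative moment of a sum: $\e_\xi[(Z_{n,1}^{(1)}+\dots+Z_{n,j}^{(1)})^{-r}] = \frac{1}{\Gamma(r)}\int_0^\infty (\e_\xi[e^{-tZ_n^{(1)}}])^j\, t^{r-1}\,dt = \frac{1}{\Gamma(r)}\int_0^\infty g_n^j(e^{-t})\, t^{r-1}\,dt$ in the shifted environment. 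Combining everything and peeling one generation at a time, one arrives at an exact recursive formula of the shape \eqref{eq inequation Zn-r}: writing $c_r = \e m_0^{-r}$ and using the change of measure \eqref{changement de mesure} to absorb the factors $\Pi_n^{-r}$, one gets
\[
\e_k[Z_{n+1}^{-r}] = \gamma_k^{\,n+1} + \sum_{j=0}^n \gamma_k^{\,n-j}\, c_r^{\,j}\, b_{k,j}(r),
\]
where $b_{k,j}(r) = \gamma_k^{-1}\,\e^{(r)}\!\bigl[\frac{1}{\Gamma(r)}\int_0^\infty \bar G_{k,1}(\phi_{\xi,j}(t))\,t^{r-1}\,dt\bigr]$ with $\phi_{\xi,j}$ the quenched Laplace transform of $W_j$ (which tends to that of $W$), so that $(b_{k,j}(r))_{j\geq 0}$ is increasing in $j$ and bounded, converging to a finite limit $b_{k,\infty}(r)$ thanks to \eqref{cv L1 Pr W} and dominated convergence; the domination and finiteness of the limit will require the moment hypothesis $\e m_0^{r_k+\varepsilon}<\infty$.

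The second step is to extract the asymptotics of the right-hand side according to the sign of $\gamma_k - c_r$, i.e.\ whether $r>r_k$, $r=r_k$ or $r<r_k$ (using that $r\mapsto c_r$ is strictly decreasing and $c_{r_k}=\gamma_k$). When $r<r_k$ we have $c_r>\gamma_k$, the sum is dominated by the $j=n$ term, $\e_k[Z_{n+1}^{-r}] \sim b_{k,\infty}(r)\, c_r^{\,n}$ — up to identifying the constant, which by construction equals $\frac{1}{\Gamma(r)}\int_0^\infty \e_k^{(r)}[e^{-tW}]\,t^{r-1}\,dt = \frac{1}{\Gamma(r)}\int_0^\infty \phi_k^{(r)}(t)\,t^{r-1}\,dt$, matching \eqref{equivalent Z_n}; here one needs $\phi_k^{(r)}(t)$ integrable against $t^{r-1}$ near $0$, which follows from the harmonic-moment bound for $W$ under $\p_k^{(r)}$ at orders $<r_k$ established in \cite{glm2016asymptotic}. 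When $\gamma_k > c_r$ (i.e.\ $r>r_k$) the geometric series $\sum_j \gamma_k^{\,n-j} c_r^{\,j} b_{k,j}(r)$ is dominated by the small-$j$ terms and, after dividing by $\gamma_k^{\,n}$, one recognizes a convergent series whose sum is exactly $\frac{1}{\Gamma(r)}\int_0^\infty Q_k(e^{-t})\,t^{r-1}\,dt$, where $Q_k(t)=\lim_n \gamma_k^{-n} G_{k,n}(t)$; positivity and finiteness of $Q_k$ is again an input from \cite{glm2016asymptotic} (this is the analogue of the classical Galton-Watson function $Q$). When $r=r_k$ we have $\gamma_k=c_r$ and the sum becomes $\gamma_k^{\,n}\sum_{j=0}^n b_{k,j}(r) \sim n\,\gamma_k^{\,n}\, b_{k,\infty}(r_k)$ by Cesàro, yielding the $n\gamma_k^{\,n}$ rate with constant $b_{k,\infty}(r_k) = \gamma_k^{-1}\,\e^{(r_k)}\!\bigl[\frac{1}{\Gamma(r_k)}\int_0^\infty \bar G_{k,1}(\phi_\xi(t))\,t^{r_k-1}\,dt\bigr]$, which is \eqref{equivalent Z_n} in the critical case.

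The third step is to verify the three constants $C(k,r)$ are genuinely in $(0,\infty)$: strict positivity is clear in the critical and subcritical cases (the integrands are strictly positive), and in the supercritical case $r>r_k$ it reduces to $Q_k\not\equiv 0$, which holds because $\gamma_k^{-n}G_{k,n}(t)$ is nondecreasing in $n$ and bounded below by $\gamma_k^{-n}\p_k(Z_n=k)t^k = t^k$. Finiteness is where the moment assumption does the work: for $r<r_k$ one needs $W$ to have finite harmonic moments of order up to just below $r_k$ under $\p_k^{(r)}$, and for $r\geq r_k$ one needs the series $\sum_j c_r^{\,j}\gamma_k^{-j} b_{k,j}(r)$ (resp.\ $b_{k,\infty}(r_k)$) to converge, which in turn needs control of $\bar G_{k,1}(\phi_\xi(t))$ for small $t$ — this forces a bound of the form $\bar G_{k,1}(\phi_\xi(t)) = O(t^{r_k+\varepsilon'})$ coming from $\e m_0^{r_k+\varepsilon}<\infty$. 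I expect the main obstacle to be precisely the uniform integrability and domination needed to pass $b_{k,j}(r)\to b_{k,\infty}(r)$ and to guarantee finiteness of all three constants under the single hypothesis $\e m_0^{r_k+\varepsilon}<\infty$; this is where the delicate estimates from \cite{glm2016asymptotic} on the tail of $\p(Z_n=j)$ and on the harmonic moments of $W$ must be invoked carefully, and where the distinction between the BPRE case and the Galton-Watson case (in which $r_k$ coincides with the critical exponent for harmonic moments of $W$) becomes visible.
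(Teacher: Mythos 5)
Your strategy coincides with the paper's for $r\le r_k$: the exact recursion obtained from the branching property with $m=1$ (the paper's \eqref{eq iteration Zn}--\eqref{induction exacte EZn-r}), the change of measure \eqref{changement de mesure}, and a Ces\`aro argument in the critical case are exactly how the paper treats $r=r_k$; and for $r<r_k$ the identity $\e_k[Z_n^{-r}]=\e_k^{(r)}[W_n^{-r}]c_r^n$ together with monotonicity and Lemma \ref{thm harmonic moments W} applied under $\p^{(r)}$ (the condition there reads $\gamma_k/c_r<1$) gives the limit $\e_k^{(r)}[W^{-r}]$ directly, as you also indicate. Two remarks on this part: in your recursion-based reading of $r<r_k$ the limit constant is not $b_{k,\infty}(r)$ but $b_{k,\infty}(r)/(c_r-\gamma_k)$, since the whole geometric tail $\sum_{j}(\gamma_k/c_r)^{n-j}b_{k,j}$ contributes, not just the term $j=n$; and for $r=r_k$ the boundedness of $(b_{k,j})$ should be made explicit as in the paper, via $\e^{(r_k)}[p_1^{k+1}(\xi_0)m_0^{r_k}]=\gamma_{k+1}/\gamma_k<1$, whence $\e_{k+1}^{(r_k)}[W^{-r_k}]<\infty$ and $\e_i^{(r_k)}[W^{-r_k}]\le\e_{k+1}^{(r_k)}[W^{-r_k}]$ for all $i\ge k+1$ by the comparison \eqref{eq Ei leq Ek}.

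The genuine gap is in the case $r>r_k$, which you also try to run through the recursion. Your argument rests on the claim that $b_{k,j}(r)$ increases to a finite limit, and this is false in general: $b_{k,j}(r)$ increases to $\sum_{i>k}\e_i^{(r)}[W^{-r}]\p_k(Z_1=i)$, and by Lemma \ref{thm harmonic moments W} under $\p^{(r)}$ one has $\e_i^{(r)}[W^{-r}]<\infty$ if and only if $\gamma_i<c_r$; as soon as $r$ is large enough that $c_r\le\gamma_{k+1}$ (already for a Galton--Watson process with $r\ge r_{k+1}$ and $\p_k(Z_1=k+1)>0$) the term $i=k+1$ is infinite, so $(b_{k,j}(r))_j$ is unbounded and your dominated-convergence step breaks down. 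Moreover, even in the range where the series $\sum_j(c_r/\gamma_k)^j b_{k,j}(r)$ does converge, you never actually identify its sum with $\frac{1}{\Gamma(r)}\int_0^\infty Q_k(e^{-t})t^{r-1}dt$; ``one recognizes'' hides the real input. The paper avoids the recursion altogether here: it writes $\gamma_k^{-n}\e_k[Z_n^{-r}]=\sum_{j\ge k}j^{-r}\gamma_k^{-n}\p_k(Z_n=j)$ and passes to the limit using the local asymptotic \eqref{small value asymptotic 2}, $\p_k(Z_n=j)\sim\gamma_k^n q_{k,j}$, together with the summability \eqref{eq decroissance qkj}, $\sum_{j\ge k}j^{-r}q_{k,j}<\infty$ for $r>r_k$, which is precisely where the hypothesis $\e m_0^{r_k+\varepsilon}<\infty$ enters in this case; the resulting limit is then rewritten as the $Q_k$-integral by the Gamma representation. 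Note that the property of $Q_k$ you cite (existence, positivity, radius of convergence $1$) is not sufficient: the integral $\int_0^1 Q_k(e^{-t})t^{r-1}dt$ is governed by the behaviour of $Q_k$ at $1^-$, i.e.\ by the decay of the coefficients $q_{k,j}$, which is exactly the content of part b) of Lemma \ref{thm small value probability 2}. Without invoking that result (or proving a substitute), your $r>r_k$ case is not established.
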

This theorem shows that there is a phase transition in the rate of convergence of the harmonic moments of $Z_n$ with the critical value $r_k>0$ defined by \eqref{equation rj}.
This critical value $r_k$ is generally different from the critical value $a_k$ for the existence of the harmonic 
moment of $W.$ 
Indeed, as shown in \cite[Theorem 2.1]{glm2016asymptotic} (see Lemma \ref{thm harmonic moments W} below),
the critical value $a_k$ is determined by
\begin{equation}
\label{equation ak}
\mathbb{E} \left[ p_1^k(\xi_0) m_0^{a_k} \right] = 1,
\end{equation}
which is in general different from the critical value $r_k$ determined by \eqref{equation rj}, that is 
\begin{equation}
\label{equation rk}
\mathbb{E} \left[ p_1^k (\xi_0) \right] = \mathbb{E} \left[ m_0^{-r_k} \right]. 
\end{equation}
This is in contrast with the Galton-Watson process where $r_k=a_k=r_1^k$, with $r_1$ the solution of the equation $p_1 m_0^{r_1}  = 1$ which coincides with both equations
(\ref{equation ak}) and (\ref{equation rk}).
Theorem \ref{thm harmonic moments Zn} generalizes the result of \cite{ney2003harmonic} for the Galton-Watson process. 
For a BPRE, it completes and improves Theorem 1.3 of \cite{liu}, where the formula (\ref{equivalent Z_n}) was only proved for $k=1$ and $r<r_1$, and under the following much stronger boundedness condition:
there exist some constants $p, c_0, c_1 >1$ such that
\[
c_0 \leq m_0 \leq c_1  \quad \text{and} \quad  c_0 \leq m_0 (p) \leq c_1 \quad a.s.,
\]
where $m_0(p) = \sum_{i=1}^\infty i^{p} p_i (\xi_0) $.
Instead of this boundedness condition, here we only require the moment assumption $\e [m_0^{r_k+ \varepsilon}] < \infty$ for some $\varepsilon>0$. 

For the Galton-Watson process with $k=1$ initial individuals, the expression of the limit constant in the case when $r=r_1$ (up to the constant factor $\Gamma (r)$) becomes 
\[
\frac{1}{\gamma}  \int_{0}^{\infty} \bar{G} ( \phi (u) )  u^{r-1} du,
\]
whereas it has been proved in \cite{ney2003harmonic} that the limit constant is equal to  
\[
\int_{1}^{m} Q ( \phi (u) )  u^{r-1} du.
\]
Actually, the above two expressions coincide, as shown by the following result  valid for a general BPRE. Let $Q_k (t)$ be defined by
\begin{equation}
\label{eq Qk}
Q_k (t) = \lim_{n \to \infty} \frac{G_{k,n}(t)}{\gamma_k^n} = \sum_{j=k}^{\infty} q_{k,j} t^j, \quad t \in [0,1),
\end{equation}
where $G_{k,n}$ is defined by \eqref{Gkn} and the limit exitsts according to \cite[Theorem 2.3]{glm2016asymptotic} (see Lemma \ref{thm small value probability 2} below).
\begin{proposition}
\label{prop relation G Q psi BPRE}
For $k \geq 1$ and $r=r_k$, we have
\begin{equation}
\label{eq egalite BPRE}
\frac{1}{\gamma_k}  \e^{(r)} \left[ \int_{0}^{\infty} \bar{G}_{k,1}( \phi_{\xi} (u) )  u^{r-1} du \right] = \e^{(r)} \left[ \int_{1}^{m_0} Q_k ( \phi_{\xi} (u) )  u^{r-1} du \right].
\end{equation}
\end{proposition}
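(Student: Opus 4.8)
The plan is to split the left-hand integral at $u=1$ and compute each part by pushing the quenched Laplace transform through an extra environment coordinate; four ingredients are needed. (i) The \emph{trivial decomposition} $\bar G_{k,1}(t)=\e\big[f_{\xi_0}(t)^k-p_1(\xi_0)^k t^k\big]$ (just the definition of $\bar G_{k,1}$ and $\gamma_k=\e\,p_1(\xi_0)^k$; the bracket equals $\sum_{j\ge k+1}[t^j](f_{\xi_0}(t)^k)\ge0$), the expectation being over one environment coordinate. (ii) A \emph{functional equation for $Q_k$}: iterating the branching property and conditioning on the last environment coordinate gives $G_{k,n+1}(t)=\e\big[G_{k,n}(f_{\xi_0}(t))\big]$, and since $G_{k,n}(\cdot)\ge\gamma_k G_{k,n-1}(\cdot)$ the ratios $G_{k,n}(\cdot)/\gamma_k^n$ are nondecreasing, so dividing by $\gamma_k^{n+1}$ and letting $n\to\infty$ by monotone convergence yields $\gamma_k Q_k(t)=\e\big[Q_k(f_{\xi_0}(t))\big]$; writing $Q_k(s)=s^k+\bar Q_k(s)$ with $\bar Q_k(s)=\sum_{j\ge k+1}q_{k,j}s^j$ (note $q_{k,k}=1$ since $\p_k(Z_n=k)=\gamma_k^n$) and using $\e\,f_{\xi_0}(t)^k=G_{k,1}(t)$, this rearranges to $\bar G_{k,1}(t)=\gamma_k\bar Q_k(t)-\e\big[\bar Q_k(f_{\xi_0}(t))\big]$. (iii) The standard \emph{Laplace functional equation} $\phi_\xi(t)=f_{\xi_0}\big(\phi_{T\xi}(t/m_0)\big)$ with $T\xi=(\xi_1,\xi_2,\dots)$; equivalently, if $\xi_0'$ is a fresh coordinate and $\eta=(\xi_0',\xi_0,\xi_1,\dots)$, then $f_{\xi_0'}(\phi_\xi(u))=\phi_\eta\big(m(\xi_0')u\big)$. (iv) Under $\p^{(r)}$ the coordinates of $\xi$ are i.i.d.\ with the tilted law $c_r^{-1}m_0^{-r}\,\tau(d\xi_0)$; since $r=r_k$ gives $c_r=\gamma_k$, one has $\e[m_0^{-r}h(\xi_0)]=\gamma_k\,\e^{(r)}[h(\xi_0)]$, and if $\xi_0'$ is tilted and independent of $\xi\sim\p^{(r)}$ then $\eta$ is again distributed as $\xi$ under $\p^{(r)}$.

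For the part $\int_0^1$ I would apply (i) at $t=\phi_\xi(u)$ with fresh $\xi_0'$, rewrite $f_{\xi_0'}(\phi_\xi(u))^k=\phi_\eta(m(\xi_0')u)^k$ by (iii), substitute $w=m(\xi_0')u$, then take $\e^{(r)}$ over $\xi$ and use (iv) together with $\e\,p_1(\xi_0')^k=\gamma_k$ and $\xi_0'\perp\xi$:
\[
\e^{(r)}\!\left[\int_0^1\bar G_{k,1}(\phi_\xi(u))u^{r-1}\,du\right]=\gamma_k\,\e^{(r)}\!\left[\Big(\int_0^{m_0}-\int_0^1\Big)\phi_\xi(w)^k\,w^{r-1}\,dw\right]=\gamma_k\,\e^{(r)}\!\left[\int_1^{m_0}\phi_\xi(w)^k\,w^{r-1}\,dw\right],
\]
the last step using $m_0\ge1$; all integrals are finite since $\phi_\xi\le1$ and $\e^{(r)}m_0^{r}=\gamma_k^{-1}<\infty$. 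For the part $\int_1^\infty$ the same manipulation, starting instead from (ii) and using $\bar Q_k(f_{\xi_0'}(\phi_\xi(u)))=\bar Q_k(\phi_\eta(m(\xi_0')u))$, gives
\[
\e^{(r)}\!\left[\int_1^\infty\bar G_{k,1}(\phi_\xi(u))u^{r-1}\,du\right]=\gamma_k\,\e^{(r)}\!\left[\Big(\int_1^\infty-\int_{m_0}^\infty\Big)\bar Q_k(\phi_\xi(w))w^{r-1}\,dw\right]=\gamma_k\,\e^{(r)}\!\left[\int_1^{m_0}\bar Q_k(\phi_\xi(w))w^{r-1}\,dw\right],
\]
the last step using $m_0\ge1$ and, crucially, the finiteness $\e^{(r)}\big[\int_1^\infty\bar Q_k(\phi_\xi(u))u^{r-1}\,du\big]<\infty$ so that the subtraction is legitimate. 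Adding the two displays and using $\phi_\xi(w)^k+\bar Q_k(\phi_\xi(w))=Q_k(\phi_\xi(w))$ yields $\e^{(r)}\big[\int_0^\infty\bar G_{k,1}(\phi_\xi(u))u^{r-1}\,du\big]=\gamma_k\,\e^{(r)}\big[\int_1^{m_0}Q_k(\phi_\xi(w))w^{r-1}\,dw\big]$, which is \eqref{eq egalite BPRE}. (The substitutions and the applications of Tonelli's theorem above are routine, being used only on nonnegative integrands; the splitting of integrals of differences is licit for $\p^{(r)}$-a.e.\ $\xi$, which suffices.)

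The one genuine obstacle is the finiteness $\e^{(r)}\big[\int_1^\infty\bar Q_k(\phi_\xi(u))u^{r-1}\,du\big]<\infty$; note this single estimate also forces the right-hand side of \eqref{eq egalite BPRE} to be finite, consistently with Theorem~\ref{thm harmonic moments Zn}. To prove it I would use that $\bar Q_k(s)=O(s^{k+1})$ as $s\to0$, that $s\mapsto\bar Q_k(s)/s^{k+1}$ is nondecreasing, that $\phi_\xi(u)\le\phi_\xi(1)$ for $u\ge1$, and the deterministic bound $\phi_\xi(1)=\e_\xi e^{-W}\ge e^{-\e_\xi W}\ge e^{-1}$ (as $(W_n)$ is a nonnegative $\p_\xi$-martingale with $W_0=1$), to get
\[
\int_1^\infty\bar Q_k(\phi_\xi(u))u^{r-1}\,du\le e^{k+1}\bar Q_k(\phi_\xi(1))\int_1^\infty\phi_\xi(u)^{k+1}u^{r-1}\,du\le e^{k+1}\Gamma(r)\,\bar Q_k(\phi_\xi(1))\,\e_\xi\big[(W_1+\dots+W_{k+1})^{-r}\big],
\]
with $W_1,\dots,W_{k+1}$ i.i.d.\ copies of $W$ under $\p_\xi$. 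One then concludes by H\"older's inequality, controlling the two factors separately: on one hand the critical exponent for the harmonic moments of $W$ started from $k+1$ under $\p^{(r_k)}$ is, by \cite[Theorem 2.1]{glm2016asymptotic}, the solution $a$ of $\e\big[p_1(\xi_0)^{k+1}m_0^{a-r_k}\big]=\gamma_k$, which strictly exceeds $r_k$ because $\e\,p_1(\xi_0)^{k+1}<\e\,p_1(\xi_0)^k=\gamma_k$ while $s\mapsto\e\big[p_1(\xi_0)^{k+1}m_0^s\big]$ is increasing ($m_0\ge1$), so $\e^{(r_k)}_{k+1}\big[W^{-r_kq}\big]<\infty$ for some $q>1$; on the other hand one needs $\e^{(r_k)}\big[\bar Q_k(\phi_\xi(1))^p\big]<\infty$ for the conjugate $p$, which follows from the moment hypothesis $\e\,m_0^{r_k+\varepsilon}<\infty$ together with the estimates on $Q_k$ and on the left tail of $W$ in \cite{glm2016asymptotic}. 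This last bound is the technical heart of the argument, while everything else is the bookkeeping of ingredients (i)--(iv).
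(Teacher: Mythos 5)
Your overall skeleton is sound, and your treatment of the piece $\int_0^1$ (ingredients (i), (iii), (iv), the substitution $w=m(\xi_0')u$ and the tilting identity $\e[m_0^{-r}h(\xi_0)]=\gamma_k\,\e^{(r)}[h(\xi_0)]$) is correct and essentially equivalent to the paper's computation of that piece. The gap is in the piece $\int_1^\infty$: your route subtracts two integrals over infinite ranges, and therefore needs the a priori finiteness $\e^{(r)}\bigl[\int_1^\infty \bar Q_k(\phi_\xi(u))u^{r-1}\,du\bigr]<\infty$, which you acknowledge but do not prove. Your sketch for it does not go through as stated. In the H\"older step the admissible exponent $q$ is capped by $a/r_k$, where $a$ solves $\e[p_1^{k+1}(\xi_0)m_0^{a-r_k}]=\gamma_k$; this ratio can be arbitrarily close to $1$, so the conjugate exponent $p$ may have to be arbitrarily large, and you would need $\e^{(r)}\bigl[\bar Q_k(\phi_\xi(1))^p\bigr]<\infty$ for that uncontrolled $p$. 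Since $Q_k(t)\uparrow\infty$ as $t\uparrow 1$ (indeed $Q_k(t)\geq G_{k,n}(t)/\gamma_k^n$ and $G_{k,n}(1)=1$), such moments amount to high negative moments of $1-\phi_\xi(1)=\e_\xi[1-e^{-W}]$ under $\p^{(r)}$, i.e.\ quantitative \emph{quenched} small-value estimates for $W$; these are not among the results of \cite{glm2016asymptotic} quoted in the paper, and the obvious Jensen reduction to $\e^{(r)}[W^{-A}]$ cannot deliver arbitrarily large $A$ because the annealed harmonic moments of $W$ have a finite critical exponent (Lemma \ref{thm harmonic moments W}). Moreover the two factors in your bound blow up on the same bad environments, so the H\"older split is lossy. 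Without the finiteness, your manipulation only yields an identity in $[0,\infty]$ of the form $X+Y=\gamma_k(\text{target})+Y$ with possibly $Y=\infty$, which does not give \eqref{eq egalite BPRE}. Note also that the proposition carries no moment hypothesis, while your finiteness sketch needs $\e m_0^{r_k+\varepsilon}<\infty$ even to define the radius-of-convergence estimates you invoke.

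The paper's proof is arranged precisely to avoid any such integrability input: instead of using the one-step functional equation $\gamma_k Q_k=\e[Q_k(f_0)]$ and subtracting, it decomposes $[1,\infty)$ into the random annuli $[\Pi_j,\Pi_{j+1}]$, changes variables $u=t\Pi_j$, and telescopes the nondecreasing sequence $G_{k,j}(\phi_\xi(t))/\gamma_k^j\uparrow Q_k(\phi_\xi(t))$ from \eqref{cv Qnk ->Qk}; every increment is nonnegative, Tonelli and monotone convergence apply throughout, and the only subtraction ever performed is of $\e^{(r)}\bigl[\int_1^{m_0}\phi_\xi^k(t)t^{r-1}\,dt\bigr]\leq \e^{(r)}[m_0^r]/r=1/(r\gamma_k)<\infty$. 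You can repair your argument along the same lines with minimal change: iterate your identity (ii) $n$ times before integrating (equivalently, telescope $\gamma_k^{-j}G_{k,j}$ rather than applying the limit equation once), and let $n\to\infty$ by monotone convergence; this removes the need for your finiteness lemma altogether and proves the identity in full generality, with finiteness of both sides then supplied separately by Theorem \ref{thm harmonic moments Zn}.
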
 
As an application of Theorem \ref{thm harmonic moments Zn} we get a large deviation result.
Indeed, $\e [Z_n^{\lambda}]= \e [e^{\lambda \log Z_n}]$ is the Laplace transform of $\log Z_n$. From Theorem \ref{thm harmonic moments Zn} we obtain
\begin{equation}
\label{eq chi intro}
\lim_{n \to \infty} \frac{1}{n} \log \e_k [Z_n^{\lambda}] = \chi_k (\lambda) = 
\left\{ 
\begin{array}{l l}
\log \gamma_k & \text{if} \ \lambda \leq \lambda_k, \\ 
\Lambda (\lambda) & \text{if} \ \lambda \in [ \lambda_k, 0]. 
\end{array}
\right. 
\end{equation}
Thus using a version of the  G\"artner-Ellis theorem adapted to the study of tail probabilities (see \cite[Lemma 3.1]{liu}), we get the following lower large deviation result for the BPRE $(Z_n)$. Recall that $\Lambda ( \lambda )= \log \e e^{\lambda X}$ is the log-Laplace of $X= \log m_0$, and $\Lambda^* ( \cdot )$ is the Fenchel-Legendre transform of $\Lambda (\cdot)$ defined in \eqref{Lambda*}.
\begin{theorem}
\label{thm PGD}
Let $k \geq 1$ and $r_k$ be the solution of the equation \eqref{equation rj}. Assume that $\e m_0^{r_k+ \varepsilon} < \infty$ for some $\varepsilon>0$. Then,  for any $\theta \in (0, \e[X])$, we have
\begin{equation}
\label{eq PGD GLM}
 \lim_{n \to \infty} -\frac{1}{n} \log \p_k \left(  Z_n \leq e^{\theta n }\right)  = \chi_k^* (\theta) \in (0, \infty),
\end{equation}
where
\begin{eqnarray}
\label{eq chi thm}
\chi_k^* (\theta)  
&=& \sup_{\lambda \leq 0} \left\{ \lambda \theta - \chi_k (\lambda) \right\} 
=
\left\{ 
\begin{array}{cll}
-r_k \theta - \log \gamma_k & \text{if} & 0 < \theta < \theta_k, \\ 
\Lambda^* (\theta) &  \text{if} & \theta_k \leq \theta < \e [X],
\end{array}
\right.
\end{eqnarray}
with
\begin{equation}
\label{theta_k}
\theta_k = \Lambda ' (-r_k).
\end{equation}
\end{theorem}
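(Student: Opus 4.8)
The plan is to deduce the lower large deviation principle from the asymptotics of the Laplace transform $\e_k[Z_n^\lambda]$ via a Gärtner--Ellis type argument. The first step is to establish \eqref{eq chi intro}, i.e.\ that $\frac1n \log \e_k[Z_n^\lambda] \to \chi_k(\lambda)$ for $\lambda \leq 0$, with $\chi_k(\lambda) = \log\gamma_k$ for $\lambda \leq \lambda_k$ and $\chi_k(\lambda) = \Lambda(\lambda)$ for $\lambda_k \leq \lambda \leq 0$, where $\lambda_k = -r_k$. This follows directly from Theorem \ref{thm harmonic moments Zn} applied with $r = -\lambda > 0$: when $r > r_k$ (i.e.\ $\lambda < \lambda_k$) we have $\e_k[Z_n^\lambda] \sim C(k,r)\gamma_k^n$, so $\frac1n\log\e_k[Z_n^\lambda] \to \log\gamma_k$; when $r = r_k$ the extra factor $n$ is asymptotically negligible on the exponential scale and again the limit is $\log\gamma_k = \log c_{r_k}$; and when $r < r_k$ (i.e.\ $\lambda_k < \lambda \leq 0$) we have $\e_k[Z_n^\lambda] \sim C(k,r) c_r^n$ with $c_r = \e m_0^{-r} = \e e^{\lambda X}$, so the limit is $\log\e e^{\lambda X} = \Lambda(\lambda)$. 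One must check the moment hypothesis $\e m_0^{r_k+\ee}<\infty$ guarantees finiteness and the applicability of Theorem \ref{thm harmonic moments Zn} for every relevant $r\le r_k+\ee'$, which is routine since $\gamma_k = c_{r_k} = \e m_0^{-r_k}$ and $r \mapsto \e m_0^{-r}$ is decreasing.

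The second step is to apply the version of the Gärtner--Ellis theorem adapted to tail probabilities, namely \cite[Lemma 3.1]{liu}, to the sequence $\log Z_n$. Since $\chi_k(\lambda)$ is defined only for $\lambda\le 0$ and is finite, convex (being the pointwise limit of the convex functions $\frac1n\log\e_k[Z_n^\lambda]$, or by direct inspection: $\log\gamma_k$ is affine and $\Lambda$ is convex, and the two pieces match in value and slope at $\lambda_k$ because $\Lambda(-r_k) = \log\e m_0^{-r_k} = \log\gamma_k$), and differentiable on $(-\infty,0)$, the hypotheses of that lemma are met. It yields, for $\theta$ in the interior of the relevant range,
\[
\lim_{n\to\infty} -\frac1n\log\p_k(Z_n \le e^{\theta n}) = \chi_k^*(\theta) := \sup_{\lambda\le 0}\{\lambda\theta - \chi_k(\lambda)\}.
\]
Here the restriction to $\lambda \le 0$ in the supremum is exactly what is needed for a \emph{lower} deviation (small values of $Z_n$), and the relevant range of $\theta$ is $(0,\e[X])$, since $\e[X] = \Lambda'(0^-)$ is the a.s.\ exponential growth rate of $Z_n^{1/n}$ (by the law of large numbers for $S_n = \log\Pi_n$ together with $W_n \to W > 0$).

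The third step is the explicit computation of the Legendre transform $\chi_k^*(\theta) = \sup_{\lambda\le 0}\{\lambda\theta - \chi_k(\lambda)\}$. Split the supremum at $\lambda_k = -r_k$. On $(-\infty,\lambda_k]$, $\chi_k(\lambda) = \log\gamma_k$ is constant, so $\lambda\theta - \log\gamma_k$ is increasing in $\lambda$ (as $\theta > 0$) and the supremum over this piece is attained at $\lambda = \lambda_k$, giving $-r_k\theta - \log\gamma_k$. On $[\lambda_k,0]$, $\chi_k(\lambda) = \Lambda(\lambda)$, so we are computing $\sup_{\lambda\in[\lambda_k,0]}\{\lambda\theta - \Lambda(\lambda)\}$; the unconstrained maximizer over $\lambda \le 0$ solves $\Lambda'(\lambda) = \theta$, which lies in $[\lambda_k,0]$ precisely when $\theta \le \Lambda'(0^-) = \e[X]$ and $\theta \ge \Lambda'(\lambda_k) = \Lambda'(-r_k) =: \theta_k$. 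Hence for $\theta_k \le \theta < \e[X]$ the supremum equals $\Lambda^*(\theta) = \sup_{\lambda\le 0}\{\lambda\theta - \Lambda(\lambda)\}$, and it is straightforward to see that in this regime $\Lambda^*(\theta) \ge -r_k\theta - \log\gamma_k$ (the two agree at $\theta = \theta_k$), so $\chi_k^*(\theta) = \Lambda^*(\theta)$. For $0 < \theta < \theta_k$ the interior maximizer of the second piece would require $\Lambda'(\lambda) = \theta < \theta_k = \Lambda'(\lambda_k)$, impossible for $\lambda \ge \lambda_k$ by convexity, so the second piece is maximized at its left endpoint $\lambda_k$ as well, and $\chi_k^*(\theta) = -r_k\theta - \log\gamma_k$. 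This gives exactly \eqref{eq chi thm}. Finally, $\chi_k^*(\theta) \in (0,\infty)$ for $\theta \in (0,\e[X])$ follows since $\Lambda^*$ is finite and positive on $(0,\e[X])$ (as $\Lambda^*(\e[X]) = 0$ and $\Lambda^*$ is convex, decreasing toward $\e[X]$) and since $-r_k\theta - \log\gamma_k > -r_k\theta_k - \log\gamma_k = \Lambda^*(\theta_k) \ge 0$, with strict positivity because $\theta < \e[X]$.

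The main obstacle, and the only genuinely delicate point, is verifying that \cite[Lemma 3.1]{liu} applies: one must confirm that $\chi_k$ is a well-defined, finite, convex, essentially smooth (steep) function on its effective domain $(-\infty,0]$, and that the regularity at the junction point $\lambda_k$ (where the two analytic pieces meet) causes no problem — here the key identity $\Lambda(-r_k) = \log\gamma_k$ from the defining equation \eqref{equation rj} ensures continuity, and a short argument comparing one-sided derivatives $\Lambda'(-r_k^-)$ versus the slope $0$ of the constant piece ensures convexity is preserved across $\lambda_k$. Everything else reduces to the Legendre-transform bookkeeping sketched above and the monotonicity/convexity of $\Lambda$, which are standard.
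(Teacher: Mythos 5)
Your proposal is correct and follows essentially the same route as the paper: deduce the limit $\frac1n\log\e_k[Z_n^{\lambda}]\to\chi_k(\lambda)$ from Theorem \ref{thm harmonic moments Zn}, invoke the tail-probability version of the G\"artner--Ellis theorem (\cite[Lemma 3.1]{liu}), and then evaluate $\sup_{\lambda\leq 0}\{\lambda\theta-\chi_k(\lambda)\}$ by splitting at $\lambda_k=-r_k$ and using $\Lambda(-r_k)=\log\gamma_k$ together with $\theta_k=\Lambda'(-r_k)$. Your Legendre-transform bookkeeping (locating the maximizer on each piece) is just a slight rephrasing of the paper's argument via monotonicity of $\Lambda^*$, so no substantive difference.
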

The value $\chi_k^* (\theta)$ can be interpreted  geometrically as the maximum distance between the graphs of the linear function $l_{\theta} : \lambda \mapsto \theta \lambda$ with slope $\theta$ and the function $\chi_k: \lambda \mapsto \chi_k (\lambda)$ defined in \eqref{eq chi intro}. Taking into account the fact that $\chi (\lambda) = \Lambda (\lambda) $ for $\lambda \in [-r_k, 0]$ and $\chi (\lambda) = \log \gamma_k$ for $\lambda \leq -r_k$, we can easily describe the phase transitions of $\chi^* (\theta)$ depending on the value of the slope $\theta$ of the function $l_\theta$:
\begin{enumerate}
\item in the case when $\theta \in (\theta_k, \e [X])$,  the maximum $\sup_{\lambda \leq 0} \{ l_{\theta} (\lambda) - \chi (\lambda) \}$ is attained for $\lambda \in (-r_k, 0)$ such that $\chi'( \lambda_\theta ) = \Lambda'(\lambda_\theta) = \theta$, whose value is $\chi^*(\theta)= \Lambda^* (\theta)$ (see Fig. 1);
\item the case when $\theta =\theta_k$ is the critical slope for which  the equation $\Lambda'(\lambda) = \theta_k$ has a solution given by $\lambda= -r_k$ (see Fig. 2);
\item in the case when $\theta \in (0, \theta_k)$, the maximum $\sup_{\lambda \leq 0}\{ l_{\theta} (\lambda) - \Lambda (\lambda) \}$ is attained for $\lambda = -r_k$, and then $\chi^*(\theta)= -r_k \theta - \log \gamma_k$ becomes linear  in $\theta$ (see Fig. 3).
\end{enumerate}

\begin{center}
Fig. 1,2,3: Geometrical interpretation of $\chi^* (\theta)$
\end{center}
\vspace{0.5cm}
\begin{minipage}[c]{8cm}
\begin{center}
\begin{tikzpicture}[scale=0.5]
\draw[ ->](-8,0)--(5,0) node[below]{$\lambda$};
\draw[->](0,-4.5)--(0,4.5) ;
\draw [thick, domain=-8:-5] plot(\x, {-5*ln(2)}); 
\draw [thick, domain=-5:3] plot(\x, {-5*ln(1-\x/5)}); 
\draw [ very thick, dashed, domain=-8:3] plot(\x, {-5*ln(1-\x/5)}); 

\draw [domain=-6:4] plot(\x, {0.8*(\x)}); 
\draw (3,2) node[right]{$l_{\theta} (\lambda)$}; 
\draw [very thick] (-2.5, -0.8) node[left]{ \textbf{$\chi^*(\theta)$}};
\draw [->] (-2.5, -0.8)--(-1.2,-0.8);

\foreach \x in {-1.2} \draw[ thick] (\x, 2pt)--(\x,-2pt);
\draw (-1.2,0) node[above]{$\lambda_{\theta}$};
\foreach \x in {-5} \draw[ thick] (\x, 2pt)--(\x,-2pt);
\draw (-5,0) node[above]{$-r_k$};
\draw (0,-3.5) node{--}; 
\draw (0,-3.5) node[right]{$\log \gamma_k$}; 
\draw  (-7,3) node{$\Lambda (\lambda)$};
\draw [ dashed, thick] (-6,3)--(-5,3) ;
\draw  (-7,4) node{$\chi (\lambda)$};
\draw [ thick] (-6,4)--(-5,4) ;
\draw  (-1,-6) node{Fig. 1:  $\theta \in ( \theta_k , \e[X])$};
\end{tikzpicture}
\end{center}
\end{minipage}
\begin{minipage}[c]{7cm}
\begin{flushright}
\begin{tikzpicture}[scale=0.5]
\draw[ ->](-8,0)--(5,0) node[below]{$\lambda$};
\draw[->](0,-4.5)--(0,4.5) ;
\draw [thick, domain=-8:-5] plot(\x, {-5*ln(2)}); 
\draw [thick, domain=-5:3] plot(\x, {-5*ln(1-\x/5)}); 
\draw [ very thick, dashed, domain=-8:3] plot(\x, {-5*ln(1-\x/5)}); 

\draw [ domain=-8:5] plot(\x, {1/2*(\x)}); 
\draw (3,1.1) node[right]{$l_{\theta_k}(\lambda)$};
\draw [very thick, domain=0:1] plot(-5, {-5*ln(2)-(1/2*(5)-5*ln(2))*\x});
\draw [ very thick] (-2.5, -2.7) node[right]{\textbf{$\chi^*(\theta_k)$}};
\draw [->] (-2.5, -2.7)--(-4.9, -2.7) ;

\foreach \x in {-5} \draw[ thick] (\x, 2pt)--(\x,-2pt);
\draw (-5,0) node[above]{$-r_k$};
\draw (0,-3.5) node{--}; 
\draw (0,-3.5) node[right]{$\log \gamma_k$}; 
\draw  (-7,3) node{$\Lambda (\lambda)$};
\draw [ dashed, thick] (-6,3)--(-5,3) ;
\draw  (-7,4) node{$\chi (\lambda)$};
\draw [ thick] (-6,4)--(-5,4) ;
\draw  (-1,-6) node{Fig. 2: $\theta = \theta_k$};
\end{tikzpicture}
\end{flushright}
\end{minipage}

\vspace{0.5cm}

\begin{minipage}[c]{8cm}
\begin{center}
\begin{tikzpicture}[scale=0.5]
\draw[ ->](-8,0)--(5,0) node[below]{$\lambda$};
\draw[->](0,-5)--(0,5) ;
\draw [thick, domain=-8:-5] plot(\x, {-5*ln(2)}); 
\draw [thick, domain=-5:3] plot(\x, {-5*ln(1-\x/5)}); 
\draw [ very thick, dashed, domain=-8:3] plot(\x, {-5*ln(1-\x/5)}); 

\draw [domain=-8:5] plot(\x, {1/4*(\x)}); 
\draw (3,1.7) node[right]{$l_{\theta}(\lambda)$};
\draw [ very thick, domain=0:1] plot(-5, {-5*ln (2) - (-5*ln(2)+5/4)*\x}); 
\draw [very thick] (-5, -2.3) node[left]{\textbf{$\chi^*(\theta)$}};

\foreach \x in {-5} \draw[ thick] (\x, 2pt)--(\x,-2pt);
\draw (-5,0) node[above]{$-r_k$};
\draw (0,-3.5) node{--}; 
\draw (0,-3.5) node[right]{$\log \gamma_k$}; 
\draw  (-7,3) node{$\Lambda (\lambda)$};
\draw [ dashed, thick] (-6,3)--(-5,3) ;
\draw  (-7,4) node{$\chi (\lambda)$};
\draw [ thick] (-6,4)--(-5,4) ;
\draw  (-1,-6) node{Fig. 3:  $\theta \in (0, \theta_k)$};
\end{tikzpicture}
\end{center}
\end{minipage}
\vspace{0.5cm}

\begin{remark}
Theorem \ref{thm PGD} corrects and improves the result of  \cite[Theorem 3.1(ii)]{bansaye2013lower}.
Moreover it gives new and alternative expressions of the rate function 
and the critical value. Actually it was proved in \cite[Theorem 3.1(ii)]{bansaye2013lower} that, assuming $\p (Z_1=0)=0$ and  $\e m_0^t< \infty$ for all $t>0$, we have 
\begin{equation}
\label{eq PGD GLM bansaye}
 \lim_{n \to \infty} -\frac{1}{n} \log \p_k \left(  Z_n \leq e^{\theta n }\right)  =I_k (\theta) \in (0, \infty),
\end{equation}
where
\begin{eqnarray}
\label{eq chi thm002}
I_k ( \theta)  
&=&
\left\{ 
\begin{array}{ccl}
\rho_k \left(1 - \frac{\theta}{\theta_k^*} \right) + \frac{\theta}{\theta_k^*}
\Lambda^* (\theta_k^*) & \text{if} & 0 < \theta \leq \theta_k^*, \\ 
\Lambda^* (\theta) &  \text{if} & \theta_k^* \leq \theta < \e [X],
\end{array}
\right.
\end{eqnarray}
with 
\begin{equation}
\label{rho_k bansaye}
\rho_k = \lim_{n \to \infty} -\frac{1}{n} \log \p_k ( Z_n = j )
\end{equation}
and 
$\theta_k^* $ the unique solution on $(0, \e [X])$ of the equation 
\begin{equation}
\label{theta_* bansaye}
\frac{\rho_k- \Lambda^* (\theta_k^*)}{\theta_k^*} = \inf_{0 \leq \theta \leq \e [X]} \frac{\rho_k- \Lambda^* (\theta)}{\theta}.
\end{equation}

It has been stated mistakenly in \cite{bansaye2014small} that $\rho_k = -k \log \gamma$, whereas the correct statement is 
\begin{equation}
\label{eq correction rho_k}
\rho_k = - \log \gamma_k ,
\end{equation}
according to \cite[Theorem 2.3]{glm2016asymptotic} (see Lemma \ref{thm small value probability 2} below).
With this correction, the two critical values $\theta_k$ and $\theta_k^*$ and the two rate functions $I_k$ and $\chi_k^*$ coincide, that is 
\begin{equation}
\label{eq expression alternative fct taux}
\theta_k = \theta_k^* \quad \text{and} \quad \chi_k^* (\theta) = I_k (\theta) \quad \text{for all}\ \ \theta \in (0, \e [X]) .
\end{equation}
Indeed, by the definition of $\theta_k^*$, the derivative of the function $ \theta \mapsto \frac{\rho_k - \Lambda^* (\theta)}{\theta}$ vanishes for $\theta=\theta_k^*$. Therefore,  since $(\Lambda^*)' (\theta) = \lambda_{\theta} $ with $\Lambda'(\lambda_{\theta})=\theta$, we get, for $\theta=\theta_k^*$, 
\begin{equation}
\label{eq theta=theta_k^*}
 \Lambda^* (\theta)=  \lambda_{\theta} \theta + \rho_k.
\end{equation}
Using the identity $\Lambda^*(\theta) = \lambda_{\theta} \theta - \Lambda (\lambda_\theta)$, we obtain
\[ 
\Lambda (\lambda_\theta) = - \rho_k,
\]
which implies that $\lambda_{\theta} = -r_k$ and then $\theta_k^* = \Lambda ' (-r_k) = \theta_k$.

Moreover, coming back to \eqref{eq theta=theta_k^*} and using the identities
$\Lambda (-r_k) = \log \gamma_k = - \rho_k$ and $\theta_k = \Lambda ' (-r_k)$, we get
\[
- r_k \theta_k - \Lambda (-r_k) = - r_k \theta_k + \rho_k = \Lambda_k^* (\theta_k).
\] 
Therefore, for any $\theta \in [0, \theta_k]$,
\begin{eqnarray*}
-r_k \theta - \log \gamma_k  &=& -r_k \theta - \Lambda (-r_k) \\
&=& \frac{\theta}{\theta_k} \left(-r_k \theta_k - \Lambda (-r_k) \right) +\frac{\theta}{\theta_k} \Lambda (-r_k) - \Lambda (-r_k) \\
&=& \frac{\theta}{\theta_k} \Lambda^* ( \theta_k ) - \left( 1 - \frac{\theta}{\theta_k} \right) \log \gamma_k,
\end{eqnarray*}
so that  $I_k (\theta) = \chi_k^* (\theta)$, which ends the proof of \eqref{eq expression alternative fct taux}.
From \eqref{eq expression alternative fct taux} and Theorem \ref{thm harmonic moments Zn}, we see that \eqref{eq PGD GLM bansaye} is valid assuming only $\p (Z_1=0)=0$ and  $\e [ m_0^{r_k+ \varepsilon} ] < \infty$ for some $\varepsilon>0$.
Actually when $\p (Z_1=0)>0$, as shown in \cite[Theorem 3.1 (i)]{bansaye2013lower},   \eqref{eq PGD GLM bansaye} remains valid with $\rho_k = \lim_{n \to \infty} -\frac{1}{n} \log \p_k ( Z_n = j ) = \rho>0$ independent of $k$. 
\end{remark}

Similarly, one can apply Theorem \ref{thm harmonic moments Zn} to get the decay rate for the probability $\p (Z_n \leq k_n)$, where $k_n $ is any sub-exponential sequence in the sense that $k_n \to \infty$ and $ k_n / \exp (\theta n) \to 0$ for every $\theta>0$, as stated in the following corollary.
\begin{corollary}
\label{cor LD ss exp}
Let $k \geq 1$ and assume that $\e [m_0^{r_k+\varepsilon}]<\infty$ for some $\varepsilon>0$. 
Let $k_n>0$ be such that $k_n \to \infty$ and $ k_n / \exp (\theta n) \to 0$ for every $\theta>0$, as $n \to \infty$. Then
\begin{equation}
\label{asympt dev kn}
\lim_{n\to\infty}\frac{1}{n} \log \mathbb{P}_k \left( Z_n \leq k_n \right) = \log \gamma_k.
\end{equation}
\end{corollary}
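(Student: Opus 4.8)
The plan is to establish the upper and lower bounds in \eqref{asympt dev kn} separately, using Theorem \ref{thm harmonic moments Zn} for the former and a direct computation for the latter. For the \emph{upper bound}, I would fix any $r>r_k$ and apply Markov's inequality in the form
\[
\mathbb{P}_k\left(Z_n\leq k_n\right)=\mathbb{P}_k\left(Z_n^{-r}\geq k_n^{-r}\right)\leq k_n^{r}\,\mathbb{E}_k\left[Z_n^{-r}\right].
\]
Since $r>r_k$ and $\e m_0^{r_k+\varepsilon}<\infty$ for some $\varepsilon>0$, Theorem \ref{thm harmonic moments Zn} gives $\mathbb{E}_k[Z_n^{-r}]\sim C(k,r)\,\gamma_k^{\,n}$, hence $\mathbb{E}_k[Z_n^{-r}]\leq C\gamma_k^{\,n}$ for $n$ large. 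Therefore $\frac1n\log\mathbb{P}_k(Z_n\leq k_n)\leq \frac{r}{n}\log k_n+\log\gamma_k+o(1)$, and the sub-exponential hypothesis $k_n/\exp(\theta n)\to 0$ for every $\theta>0$ forces $\tfrac1n\log k_n\to 0$; thus $\limsup_n\frac1n\log\mathbb{P}_k(Z_n\leq k_n)\leq\log\gamma_k$.

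For the \emph{lower bound}, I would note that under \eqref{condition p0=0} the process is non-decreasing, so $Z_n\geq Z_0=k$ $\mathbb{P}_k$-a.s.; since $k_n\to\infty$, for all $n$ large we have $k_n\geq k$ and hence $\{Z_n=k\}\subseteq\{Z_n\leq k_n\}$. Conditionally on the environment, the event $\{Z_n=k\}$ (starting from $k$ individuals) requires each of the $k$ individuals present at each of the first $n$ generations to have exactly one child, so $\mathbb{P}_\xi(Z_n=k\mid Z_0=k)=\prod_{i=0}^{n-1}p_1^{\,k}(\xi_i)$. Taking the annealed expectation and using that the $\xi_i$ are i.i.d. yields
\[
\mathbb{P}_k(Z_n=k)=\prod_{i=0}^{n-1}\e\left[p_1^{\,k}(\xi_i)\right]=\left(\e\left[p_1^{\,k}(\xi_0)\right]\right)^{n}=\gamma_k^{\,n},
\]
using the definition \eqref{eq gamma_k} of $\gamma_k$. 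Consequently $\mathbb{P}_k(Z_n\leq k_n)\geq\gamma_k^{\,n}$ for $n$ large, whence $\liminf_n\frac1n\log\mathbb{P}_k(Z_n\leq k_n)\geq\log\gamma_k$. Combining the two estimates gives \eqref{asympt dev kn}.

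There is no real obstacle here: the only points needing a little care are the elementary implication that sub-exponential growth forces $\log k_n=o(n)$ (since $\log k_n-\theta n\to-\infty$ for every $\theta>0$) and the exact identity $\mathbb{P}_k(Z_n=k)=\gamma_k^{\,n}$. As an alternative route to the upper bound one could invoke Theorem \ref{thm PGD}: for fixed small $\theta>0$ one has $k_n\leq\exp(\theta n)$ eventually, so $\limsup_n\frac1n\log\mathbb{P}_k(Z_n\leq k_n)\leq-\chi_k^*(\theta)=r_k\theta+\log\gamma_k$ for $\theta<\theta_k$ by \eqref{eq chi thm}, and letting $\theta\to0^+$ recovers the bound; I would nonetheless prefer the self-contained Markov argument, which relies only on Theorem \ref{thm harmonic moments Zn}.
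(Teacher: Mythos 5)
Your proof is correct and follows essentially the same route as the paper: Markov's inequality combined with Theorem \ref{thm harmonic moments Zn} (for $r>r_k$) for the upper bound, and the exact identity $\mathbb{P}_k(Z_n=k)=\gamma_k^n$ together with $\{Z_n=k\}\subseteq\{Z_n\leq k_n\}$ for the lower bound. You merely spell out the details (sub-exponentiality giving $\log k_n=o(n)$, and the environmentwise computation of $\mathbb{P}_k(Z_n=k)$) that the paper leaves implicit.
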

It was stated mistakenly in \cite[Theorem 3.1(ii)]{bansaye2013lower} that $
\lim_{n\to\infty} \frac{1}{n} \log \mathbb{P}_k \left( Z_n \leq k_n \right) =  k \log \gamma $. To show \eqref{asympt dev kn}, it suffices to note that by  Markov's inequality and Theorem \ref{thm harmonic moments Zn}, we have, for any $r > r_k$, 
$$ \gamma_k^n = \p_k (Z_n = k ) \leq \mathbb{P}_k ( Z_n \leq k_n ) \leq \e [Z_n^{-r}] k_n^{r} \leq \min \{ \gamma_k^n k_n^{r}, \gamma_k^n k_n^{r_k} n \}. $$ 
 
The above argument leads to a precise large deviation bound as stated below.
\begin{corollary}
\label{cor LD borne markov}
Let $k \geq 1$ and assume that $\e [m_0^{r_k+\varepsilon}]<\infty$ for some $\varepsilon>0$. Then
\begin{equation}
\label{eq sharp lower large deviation bound}
\p_k \left( Z_n \leq e^{\theta n } \right) \leq \inf_{r >0} \frac{\e_k \left[ Z_n^{-r} \right]}{e^{\theta r n}} = \left\{ \begin{array}{l c l} 
\displaystyle{ e^{ - n (  -\theta r_k - \Lambda (-r_k)) }}  &  \text{if} & 0 < \theta \leq \theta_k, \\
& \\
\displaystyle{n e^{ - n (  - \theta_k r_k - \Lambda (-r_k))} } &  \text{if} &  \theta = \theta_k, \\
& \\
\displaystyle{ e^{ - n  \Lambda^* (\theta) } } &  \text{if}& \theta_k \leq \theta < \e [X].
\end{array}
\right.
\end{equation}
\end{corollary}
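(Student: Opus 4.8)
The inequality $\p_k(Z_n\le e^{\theta n})\le\inf_{r>0}e^{r\theta n}\e_k[Z_n^{-r}]$ is Markov's inequality applied to the decreasing map $x\mapsto x^{-r}$: on $\{Z_n\le e^{\theta n}\}$ one has $e^{r\theta n}Z_n^{-r}\ge 1$, hence $\p_k(Z_n\le e^{\theta n})\le e^{r\theta n}\e_k[Z_n^{-r}]$ for every $r>0$, and one takes the infimum. So the content of the corollary is the evaluation of this infimum, which I would do by substituting the sharp asymptotics of Theorem~\ref{thm harmonic moments Zn}.

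\emph{Reduction to an exponent optimisation.} Fix $r>0$. Theorem~\ref{thm harmonic moments Zn} gives $\e_k[Z_n^{-r}]\le 2C(k,r)A_{k,n}(r)$ for all large $n$. Since $m_0\ge 1$ gives $c_r=\e[m_0^{-r}]=\e[e^{-rX}]=e^{\Lambda(-r)}$, and \eqref{equation rj} gives $\log\gamma_k=\log\e[m_0^{-r_k}]=\Lambda(-r_k)$, the definition \eqref{An} of $A_{k,n}(r)$ turns the Markov bound into
$$e^{r\theta n}\e_k[Z_n^{-r}]\ \le\ 2\,C(k,r)\,e^{n\psi_\theta(r)}\quad (r\neq r_k),\qquad e^{r_k\theta n}\e_k[Z_n^{-r_k}]\ \le\ 2\,C(k,r_k)\,n\,e^{n\psi_\theta(r_k)},$$
where $\psi_\theta(r)=\theta r+\Lambda(-r)$ for $0<r\le r_k$ and $\psi_\theta(r)=\theta r+\Lambda(-r_k)$ for $r\ge r_k$ (the two expressions agreeing at $r=r_k$). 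So, up to the $O(1)$ factor $C(k,r)$ and the factor $n$ present only at $r=r_k$, one has to minimise $\psi_\theta$.

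\emph{Minimising $\psi_\theta$ and Legendre duality.} The function $\psi_\theta$ is convex on $(0,\infty)$: it equals $\theta r+\Lambda(-r)$ (second derivative $\Lambda''(-r)\ge0$) on $(0,r_k)$, the affine function $\theta r+\Lambda(-r_k)$ on $(r_k,\infty)$, and at $r=r_k$ the right derivative $\theta$ exceeds the left derivative $\theta-\Lambda'(-r_k)=\theta-\theta_k$. As $\Lambda'$ is increasing with $\Lambda'(0)=\e[X]$ and $\Lambda'(-r_k)=\theta_k$, the map $r\mapsto\Lambda'(-r)$ decreases from $\e[X]$ to $\theta_k$ on $(0,r_k)$; hence $\psi_\theta$ has an interior minimiser $r_\theta\in(0,r_k)$ solving $\Lambda'(-r_\theta)=\theta$ when $\theta\in(\theta_k,\e[X])$, and its minimiser is the kink $r=r_k$ when $\theta\in(0,\theta_k]$. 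Since the supremum in $\Lambda^*(\theta)=\sup_{\lambda\le 0}\{\lambda\theta-\Lambda(\lambda)\}$ is attained at $\lambda=-r_\theta$, one gets $\Lambda^*(\theta)=-r_\theta\theta-\Lambda(-r_\theta)=-\psi_\theta(r_\theta)$, so $\min_{r>0}\psi_\theta=-\Lambda^*(\theta)$ for $\theta>\theta_k$, and $\min_{r>0}\psi_\theta=\psi_\theta(r_k)=-(-\theta r_k-\Lambda(-r_k))$ for $\theta\le\theta_k$; the two agree at $\theta=\theta_k$, where $r_\theta=r_k$, by exactly the computation carried out in the Remark after Theorem~\ref{thm PGD}. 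Feeding this back into the bounds above yields the three displayed estimates: the bound $\asymp e^{-n\Lambda^*(\theta)}$ for $\theta\in(\theta_k,\e[X])$, obtained at $r=r_\theta\ne r_k$ (no polynomial factor); the bound $\asymp n\,e^{-n(-\theta_k r_k-\Lambda(-r_k))}$ at $\theta=\theta_k$, where one is forced to take $r=r_k$ and the factor $n$ from $A_{k,n}(r_k)$ remains; and the bound $\asymp e^{-n(-\theta r_k-\Lambda(-r_k))}$ for $0<\theta<\theta_k$, obtained by optimising with $r$ at or just below $r_k$, where $A_{k,n}(r)=c_r^n$ carries no extra factor. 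If one wants the displayed "$=$" to be a genuine order-of-magnitude statement, the matching lower bound for $\inf_{r>0}e^{r\theta n}\e_k[Z_n^{-r}]$ (and for $\p_k(Z_n\le e^{\theta n})$ itself, via Theorem~\ref{thm PGD}) comes from the lower half of Theorem~\ref{thm harmonic moments Zn} in the same way.

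The main difficulty, and the only non-routine point, is the bookkeeping around the critical value $r=r_k$: one must verify that it suffices to choose $r$ as a function of $\theta$ alone — so that the pointwise-in-$r$ asymptotics of Theorem~\ref{thm harmonic moments Zn} are enough to control $\inf_{r>0}e^{r\theta n}\e_k[Z_n^{-r}]$ for large $n$ — and one must keep track of the factor $n$ in $A_{k,n}(r_k)$, which is the unique source of the polynomial correction and makes the critical slope $\theta=\theta_k$ genuinely exceptional, while for $\theta<\theta_k$ one sidesteps it by working slightly below $r_k$ (which is where the boundedness of $C(k,r)$ as $r\uparrow r_k$, guaranteed by the hypothesis $\e[m_0^{r_k+\varepsilon}]<\infty$, is used).
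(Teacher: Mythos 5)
Your route — Markov's inequality applied to $x\mapsto x^{-r}$ (note your form $e^{r\theta n}\e_k[Z_n^{-r}]$ is the correct one; the paper's display has a sign slip in writing $\e_k[Z_n^{-r}]/e^{\theta rn}$), followed by inserting the asymptotics $\e_k[Z_n^{-r}]\asymp C(k,r)A_{k,n}(r)$ of Theorem~\ref{thm harmonic moments Zn} and minimising the exponent $\psi_\theta(r)$ by convexity and Legendre duality — is exactly the argument the paper intends, which it only sketches in the computation preceding Corollary~\ref{cor LD ss exp}. Your identification of the three regimes, of the kink of $\psi_\theta$ at $r=r_k$, and of $A_{k,n}(r_k)=n\gamma_k^n$ as the unique source of the polynomial factor is correct, as is the duality computation $\min_r\psi_\theta=-\Lambda^*(\theta)$ for $\theta\ge\theta_k$.

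There is, however, one genuinely false step: the parenthetical claim that $C(k,r)$ stays bounded as $r\uparrow r_k$ (``guaranteed by $\e[m_0^{r_k+\varepsilon}]<\infty$''), which you use to sidestep the factor $n$ for $0<\theta<\theta_k$ by working with $r$ just below (implicitly, $n$-dependent and tending to) $r_k$. This boundedness fails. Indeed, for $r<r_k$ one has $C(k,r)=\e_k^{(r)}[W^{-r}]$ and, by \eqref{eq C(k,r) r<r_k}, $\e_k[Z_n^{-r}]\le C(k,r)\,c_r^n$ for every $n$; if $C(k,r)\le B$ on a left neighbourhood of $r_k$, letting $r\uparrow r_k$ at fixed $n$ (dominated convergence, $Z_n^{-r}\le 1$, $c_r\to\gamma_k$) would give $\e_k[Z_n^{-r_k}]\le B\gamma_k^n$ for all $n$, contradicting the critical asymptotics $\e_k[Z_n^{-r_k}]\sim C(k,r_k)\,n\,\gamma_k^n$ with $C(k,r_k)>0$. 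Equivalently, Lemma~\ref{thm harmonic moments W} under $\p^{(r_k)}$ gives $\e^{(r_k)}[p_1^k(\xi_0)m_0^{r_k}]=\gamma_k/c_{r_k}=1$, so $\e_k^{(r_k)}[W^{-r_k}]=\infty$: the blow-up of $C(k,r)$ at $r_k$ is precisely what produces the extra $n$ in the critical case. Consequently, for $\theta\in(0,\theta_k)$ your argument yields either the exact exponent with an extra factor $n$ (taking $r=r_k$, which is how the paper itself uses the bound in Corollary~\ref{cor LD ss exp}), or the exponent $\psi_\theta(r)>\psi_\theta(r_k)$ for a fixed $r<r_k$, i.e.\ an arbitrarily small loss $\varepsilon$ in the rate; the displayed ``equality'' must in any case be read up to such constants and polynomial factors, since an exact identity at finite $n$ is impossible. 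With that correction the proof is sound and coincides with the paper's intended argument.
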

The question of the exact decay rate of $\p (Z_n \leq e^{\theta n } )$ will be treated in a forthcoming paper.

As an example, let us consider the case where the reproduction law has a fractional linear generating function, that is when
\begin{equation}
\label{loi linéaire fractionnaire}
p_0 (\xi_0) = a_0 \quad \text{and}  \quad p_k (\xi_0) = \frac{(1-a_0)(1-b_0)}{b_0} b_0^k \quad \text{for all}\ k \geq 1,
\end{equation}
for which the generating function is
\begin{equation*}
\label{eq f cas lineraire fractionnaire}
f_0(t) = a_0 + \frac{(1-a_0)(1-b_0)t}{1-b_0t},
\end{equation*}
where $a_0\in [0,1) and $ $b_0 \in  (0,1)$ are random variables depending on the environment $\xi_0$. 
This case has been examinated by several authors (see e.g. \cite{kozlov2006large, 	Nakashima2013lower}).
In the case where $a_0=0$ (non-extinction), the BPRE is said to be geometric; in this case $X= \log m_0 = - \log (1-b_0)$, $\log \gamma_k = \log \e [e^{-k X}]= \Lambda (-k)$ and $r_k = k$. Therefore, we obtain the following explicit version of Theorem \ref{thm harmonic moments Zn}:
\begin{corollary}
\label{cor LDP Geom}
Let $Z_n$ be a geometric BPRE. Assume that there exists $\varepsilon>0$ such that $\e [e^{(k+\varepsilon)X}] < \infty$. Then \eqref{eq PGD GLM} holds with
\begin{eqnarray}
\label{eq chi_k geometrique}
\chi^*_k (\theta)  
&=&
\left\{ 
\begin{array}{ccl}
-k \theta - \log \e [e^{-k X}] & \text{if} & 0 < \theta \leq \theta_k, \\ 
\Lambda^* (\theta) &  \text{if} &  \theta_k \leq \theta < \e [X],
\end{array}
\right.
\end{eqnarray}
where
\begin{equation}
\label{eq theta_k geometrique}
\theta_k = \e [ X e^{-k X}] / \e [e^{-k X}].
\end{equation}
\end{corollary}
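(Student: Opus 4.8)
\textbf{Proof plan for Corollary \ref{cor LDP Geom}.}
The strategy is simply to specialize Theorem \ref{thm PGD} to the geometric case described by \eqref{loi linéaire fractionnaire} with $a_0=0$, and to verify that the hypotheses and the quantities $r_k$, $\gamma_k$, $\theta_k$, $\Lambda$ take the claimed explicit forms. First I would record that when $a_0=0$ we have $p_0(\xi_0)=0$ a.s., so condition \eqref{condition p0=0} holds and Theorem \ref{thm PGD} is applicable as soon as the moment hypothesis is met. From $p_i(\xi_0)=(1-b_0)b_0^{i-1}$ for $i\geq 1$ one computes $m_0=\sum_{i\geq 1} i(1-b_0)b_0^{i-1} = 1/(1-b_0)$, hence $X=\log m_0 = -\log(1-b_0)$, which is the stated identity for $X$.

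Next I would identify $\gamma_k$ and $r_k$. By \eqref{eq gamma_k}, $\gamma_k = \e[p_1^k(\xi_0)] = \e[(1-b_0)^k] = \e[e^{-kX}] = e^{\Lambda(-k)}$, so $\log\gamma_k = \Lambda(-k)$, as claimed. To see $r_k = k$, recall that $r_k$ is by \eqref{equation rj} the solution of $\gamma_k = \e[m_0^{-r_k}]$; since $m_0^{-1} = 1-b_0 = p_1(\xi_0)$, we have $\e[m_0^{-r}] = \e[(1-b_0)^r] = \e[e^{-rX}]$ for every $r>0$, and in particular $\e[m_0^{-k}] = \e[(1-b_0)^k] = \gamma_k$; uniqueness of the solution (the map $r\mapsto \e[m_0^{-r}]$ is strictly decreasing, being the Laplace transform of the non-degenerate law of $X$ evaluated at $-r$, noting $X>0$ with positive probability under supercriticality) then forces $r_k=k$. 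The moment hypothesis $\e[m_0^{r_k+\varepsilon}]<\infty$ of Theorem \ref{thm PGD} becomes $\e[m_0^{k+\varepsilon}] = \e[e^{(k+\varepsilon)X}]<\infty$, which is exactly the assumption made in the corollary.

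It then remains only to substitute $r_k=k$ and $\log\gamma_k = \Lambda(-k)$ into \eqref{eq chi thm} and \eqref{theta_k}. The first branch $-r_k\theta - \log\gamma_k$ becomes $-k\theta - \log\e[e^{-kX}]$, matching \eqref{eq chi_k geometrique}; the second branch $\Lambda^*(\theta)$ is unchanged. For the threshold, $\theta_k = \Lambda'(-r_k) = \Lambda'(-k)$, and differentiating $\Lambda(\lambda)=\log\e[e^{\lambda X}]$ gives $\Lambda'(\lambda) = \e[Xe^{\lambda X}]/\e[e^{\lambda X}]$, so $\theta_k = \e[Xe^{-kX}]/\e[e^{-kX}]$, which is \eqref{eq theta_k geometrique}. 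I would also note in passing that this $\theta_k$ indeed lies in $(0,\e[X])$: positivity is clear since $X\geq 0$ and $X>0$ with positive probability, while $\theta_k<\e[X]$ follows because $\lambda\mapsto\Lambda'(\lambda)$ is strictly increasing (strict convexity of $\Lambda$, $X$ being non-degenerate) and $\Lambda'(0)=\e[X]$, $-k<0$.

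There is essentially no obstacle here: the proof is a routine unwinding of definitions, the only point requiring a word of care being the identification $r_k=k$, which hinges on the coincidence $m_0^{-1}=p_1(\xi_0)$ that is special to the fractional-linear (geometric) form and on the strict monotonicity of $r\mapsto\e[m_0^{-r}]$ to guarantee uniqueness of the solution of \eqref{equation rj}.
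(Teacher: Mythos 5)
Your proof is correct and follows essentially the same route as the paper, which obtains the corollary by noting that in the geometric case $X=-\log(1-b_0)$, $\log\gamma_k=\Lambda(-k)$ and $r_k=k$, and then specializing Theorem \ref{thm PGD}. Your additional verifications (uniqueness of $r_k$ via strict monotonicity of $r\mapsto\e[m_0^{-r}]$, and $\theta_k\in(0,\e[X])$) are sound and only make the routine specialization more explicit.
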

Corollary \ref{cor LDP Geom} recovers and completes the large deviation result in \cite[Corollary 3.3]{bansaye2013lower} for a fractional linear BPRE with $a_0 >0$, where \eqref{eq PGD GLM} was obtained  with
\begin{eqnarray}
\label{eq chi_k geometrique}
I (\theta)  
&=&
\left\{ 
\begin{array}{ccl}
- \theta - \log \e [e^{- X}] & \text{if} & 0 < \theta \leq \theta^*, \\ 
\Lambda^* (\theta) &  \text{if} &  \theta^* \leq \theta < \e [X],
\end{array}
\right.
\end{eqnarray}
and
\begin{equation}
\label{eq theta_k geometrique}
\theta^*= \e [ X e^{- X}] / \e [e^{-X}].
\end{equation}
In fact the result in \cite[Corollary 3.3]{bansaye2013lower} was stated without the hypothesis $a_0>0$, but 
when $a_0=0$, this result is valid only for $k=1$,  as shown by Corollary \ref{cor LDP Geom} (for $k \geq2$, the factor $k$ is missing in \cite[Corollary 3.3]{bansaye2013lower}).

As another consequence of Theorem \ref{thm harmonic moments Zn}, we improve an earlier result about the rate of  convergence in the central limit theorem for $W-W_n$.
\begin{theorem}
\label{thm TCL W-Wn}
Assume $ \text{essinf}\ \frac{m_0 (2)}{m_0^2} >1$ and $\mathbb{E} Z_1^{2+ \varepsilon} < \infty$ for some $\varepsilon \in (0, 1]$. Then there exists a constant $C>0$ such that, for all $k \geq 1$,
\begin{equation}
\label{convergence W-Wn}
\sup_{x \in \mathbb{R}} \left| \mathbb{P}_k \left( \frac{\Pi_n ( W- W_n)}{\sqrt{Z_n} \delta_{\infty} (T^n \xi )} \leq x \right) - \Phi (x) \right| \leq C A_{k,n} ( \varepsilon/2),
\end{equation}
where $\Phi (x)= \frac{1}{\sqrt{2 \pi}}\int_{- \infty}^{x} e^{-t^2/2} dt$ is the standard normal distribution function.
\end{theorem}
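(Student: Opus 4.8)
The plan is to use the branching property to represent $\Pi_n(W-W_n)$, conditionally on $\mathcal{F}_n$, as a standardised sum of $Z_n$ i.i.d.\ copies of $W-1$ driven by the shifted environment $T^n\xi$, then to apply a conditional Berry--Esseen inequality, and finally to average the resulting bound, the averaging producing exactly the harmonic moment $\mathbb{E}_k[Z_n^{-\varepsilon/2}]$, which is controlled by Theorem~\ref{thm harmonic moments Zn}.

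First I would set up the representation. By the branching property at generation $n$ (cf.\ \eqref{decomposition Zn1}), dividing by the appropriate norming and passing to the limit, one obtains $\Pi_n W=\sum_{i=1}^{Z_n}W^{(n,i)}$, where, conditionally on $\mathcal{F}_n$, the variables $W^{(n,i)}$ are i.i.d.\ with the law of $W$ under the quenched law of the shifted environment $T^n\xi$. Since $p_0(\xi_0)=0$ and \eqref{CN CV L1 W} hold, $(W_n)$ converges in $L^1$ for $\tau$-a.e.\ environment, so $\mathbb{E}[W^{(n,i)}\mid\mathcal{F}_n]=1$; together with $\Pi_n W_n=Z_n$ this yields
\[
\frac{\Pi_n(W-W_n)}{\sqrt{Z_n}\,\delta_\infty(T^n\xi)}=\frac{1}{\sqrt{Z_n}\,\delta_\infty(T^n\xi)}\sum_{i=1}^{Z_n}\bigl(W^{(n,i)}-1\bigr),
\]
which, given $\mathcal{F}_n$ (on which $Z_n$ and $\delta_\infty(T^n\xi)$ are measurable), is the standardised sum of the $Z_n$ i.i.d.\ centred variables $W^{(n,i)}-1$, whose common conditional variance is $\delta_\infty^2(T^n\xi)$.

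Next I would bring in the two analytic inputs and a conditional Berry--Esseen bound. The hypothesis $\text{essinf}\,m_0(2)/m_0^2>1$ gives $\delta_\infty^2(\xi)\ge\mathrm{Var}_\xi(W_1)=m_0(2)/m_0^2-1\ge\delta^2>0$ for some constant $\delta$, $\tau$-a.s.; and since $p_0(\xi_0)=0$ forces $m_0\ge1$, the hypothesis $\mathbb{E}Z_1^{2+\varepsilon}<\infty$ yields $\mathbb{E}|W-1|^{2+\varepsilon}<\infty$ by the known criterion for the moments of $W$ of a supercritical BPRE (see \cite{liu}). Applying the classical Berry--Esseen bound for i.i.d.\ sums with a finite absolute moment of order $2+\varepsilon\in(2,3]$, conditionally on $\mathcal{F}_n$, gives a.s.
\[
\sup_{x\in\mathbb{R}}\left|\,\mathbb{P}_k\!\left(\frac{\Pi_n(W-W_n)}{\sqrt{Z_n}\,\delta_\infty(T^n\xi)}\le x\ \Big|\ \mathcal{F}_n\right)-\Phi(x)\right|\le C\,\rho(T^n\xi)\,Z_n^{-\varepsilon/2},
\]
where $C$ is universal and $\rho(\xi):=\mathbb{E}_\xi[|W-1|^{2+\varepsilon}]/\delta_\infty^{2+\varepsilon}(\xi)$.

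To conclude, I would integrate: taking expectations (the supremum over $x$ passing inside by the conditional Jensen inequality) gives $\sup_x|\mathbb{P}_k(\,\cdot\le x)-\Phi(x)|\le C\,\mathbb{E}_k[\rho(T^n\xi)\,Z_n^{-\varepsilon/2}]$. Under $\mathbb{P}_k$ the variable $Z_n$ depends only on $\xi_0,\dots,\xi_{n-1}$ and the reproduction up to generation $n$, hence is independent of $T^n\xi=(\xi_n,\xi_{n+1},\dots)$, which is distributed as $\xi$; therefore $\mathbb{E}_k[\rho(T^n\xi)\,Z_n^{-\varepsilon/2}]=\mathbb{E}[\rho(\xi)]\,\mathbb{E}_k[Z_n^{-\varepsilon/2}]$, with $\mathbb{E}[\rho(\xi)]\le\delta^{-(2+\varepsilon)}\,\mathbb{E}|W-1|^{2+\varepsilon}<\infty$ by the above, while $\mathbb{E}_k[Z_n^{-\varepsilon/2}]\le C\,A_{k,n}(\varepsilon/2)$ by Theorem~\ref{thm harmonic moments Zn} (only the upper bound is needed, with a constant uniform in $k$), which gives \eqref{convergence W-Wn}. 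I expect the main obstacle to be precisely the quenched control in the Berry--Esseen step — the a.s.\ positive lower bound on $\delta_\infty(\xi)$ (which is exactly why $\text{essinf}\,m_0(2)/m_0^2>1$ is imposed) and the finiteness of $\mathbb{E}|W-1|^{2+\varepsilon}$ — together with the bookkeeping that makes every constant (the Berry--Esseen constant, $\mathbb{E}[\rho(\xi)]$, and the harmonic-moment bound) uniform in the number $k$ of initial individuals, and with checking that Step~1 genuinely yields, conditionally on $\mathcal{F}_n$, i.i.d.\ variables with conditional mean $1$ and conditional variance $\delta_\infty^2(T^n\xi)$.
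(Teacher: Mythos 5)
Your proof is correct and follows essentially the same route as the paper: the quenched decomposition $\Pi_n(W-W_n)=\sum_{i=1}^{Z_n}(W(i)-1)$, the lower bound $\delta_\infty^2(\xi)\ge \operatorname{essinf} m_0(2)/m_0^2-1>0$, finiteness of $\mathbb{E}|W-1|^{2+\varepsilon}$ from $\mathbb{E}Z_1^{2+\varepsilon}<\infty$, a conditional Berry--Esseen bound of order $Z_n^{-\varepsilon/2}$, and averaging via Theorem~\ref{thm harmonic moments Zn} to get $A_{k,n}(\varepsilon/2)$. Your explicit factorization $\mathbb{E}_k[\rho(T^n\xi)Z_n^{-\varepsilon/2}]=\mathbb{E}[\rho(\xi)]\,\mathbb{E}_k[Z_n^{-\varepsilon/2}]$ is just a slightly more careful writing of the paper's final expectation step.
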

Theorem \ref{thm TCL W-Wn} improves the exponential rate of convergence in \cite[Theorem 1.7]{liu}. In our approach 
the assumption $ \text{essinf}\ \frac{m_0 (2)}{m_0^2} >1$ is required to ensure that the quenched variance $\delta_{\infty} (\xi)$ of the variable $W$ is a.s. separated from $0$. This hypothesis does not seem natural and should be relaxed. One should be able to find a suitable hypothesis to ensure the existence of harmonic moments for the random variable $\delta_{\infty} (\xi)$, which would be enough for our objective.

As another consequence of Theorem \ref{thm harmonic moments Zn}, we give some  large deviation results on the ratio 
\begin{equation}
\label{eq Rn}
R_n = \frac{Z_{n+1}}{Z_n}
\end{equation} 
toward the conditional mean $m_n = \sum_{k=1}^{\infty} k p_k (\xi_n)$. 
Let 
\begin{equation}
M_{n,j} = j^{-1} \sum_{i=1}^{j} N_{n,i} 
\label{Mnj-001}
\end{equation}
be the empirical mean of $m_n$ of size $j$ under the environment $\xi$, where the r.v.'s $N_{n,i} \ (i=1, \ldots , j)$ are i.i.d.\ with generating function $f_n$.
\begin{theorem}
\label{thm LDR Rn 1}
Let $k \geq 1$. If, for some set $D \subset \mathbb{R}$, there exist some constants $C_1>0$ and $r>0$ such that, for all $j \geq 1$,
\begin{equation}
\label{majoration LGN MA}
\mathbb{P} \left( M_{0, j} -m_0 \in D \right) \leq \frac{C_1}{j^r},
\end{equation}
then there exists a constant $B_1 \in (0, \infty)$ such that for all $n \geq 1$,
\begin{equation}
\label{majoration limsum Rn}
\mathbb{P}_k \left( R_n - m_n \in D \right) \leq B_1 A_{k,n} (r),
\end{equation}
where $A_{k,n} (r)$ is defined in \eqref{An}.
Similarly, if  there exist some constants $C_2>0$ and $r>0$ such that, for all $j \geq 1$,
\begin{equation}
\label{minoration LGN MA}
\mathbb{P} \left(  M_{0, j} - m_0 \in D \right) \geq \frac{C_2}{j^r},
\end{equation}
then there exists a constant $B_2 \in (0, \infty)$ such that for all $n \geq 1$,
\begin{equation}
\label{majoration liminf Rn}
 \mathbb{P}_k \left( R_n - m_n \in D \right) \geq B_2 A_{k,n} (r).
\end{equation}
\end{theorem}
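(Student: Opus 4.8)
The plan is to condition on $\mathcal F_n$, use the branching property \eqref{decomposition Zn1} to identify $R_n-m_n$ with a normalised sum, and thereby reduce the estimate to a harmonic moment of $Z_n$, which is then controlled by Theorem~\ref{thm harmonic moments Zn}. First I would observe that by \eqref{relation recurrence Zn}, $R_n=Z_{n+1}/Z_n=Z_n^{-1}\sum_{i=1}^{Z_n}N_{n,i}=M_{n,Z_n}$, with $M_{n,j}$ as in \eqref{Mnj-001}. Both $Z_n$ and $\xi_n$ are $\mathcal F_n$-measurable, while conditionally on $\mathcal F_n$ the reproduction variables $(N_{n,i})_{i\ge1}$ are i.i.d.\ with generating function $f_{\xi_n}$; hence
$$\mathbb{P}_k\bigl(R_n-m_n\in D\mid\mathcal F_n\bigr)=h_{\xi_n}(Z_n),\qquad h_v(j):=\mathbb{P}\bigl(M_{0,j}-m_0\in D\mid\xi_0=v\bigr),$$
where I used that, $\xi_0,\xi_1,\dots$ being i.i.d., the conditional law of $(M_{n,j},m_n)$ given $\xi_n=v$ equals that of $(M_{0,j},m_0)$ given $\xi_0=v$.

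The second step is to note that $\xi_n$ is independent of $Z_n$, since $Z_n$ is a deterministic function of $\xi_0,\dots,\xi_{n-1}$ and of the reproduction events of generations $0,\dots,n-1$, all of which are independent of $\xi_n$. Taking expectations and using this independence,
$$\mathbb{P}_k(R_n-m_n\in D)=\mathbb{E}_k\bigl[h_{\xi_n}(Z_n)\bigr]=\sum_{j\ge k}\mathbb{E}\bigl[h_{\xi_0}(j)\bigr]\,\mathbb{P}_k(Z_n=j)=\mathbb{E}_k\bigl[h(Z_n)\bigr],$$
with $h(j):=\mathbb{E}[h_{\xi_0}(j)]=\mathbb{P}(M_{0,j}-m_0\in D)$. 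Plugging in the hypotheses: under \eqref{majoration LGN MA} this yields $\mathbb{P}_k(R_n-m_n\in D)\le C_1\,\mathbb{E}_k[Z_n^{-r}]$, and under \eqref{minoration LGN MA} it yields $\mathbb{P}_k(R_n-m_n\in D)\ge C_2\,\mathbb{E}_k[Z_n^{-r}]$.

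Finally I would bound $\mathbb{E}_k[Z_n^{-r}]$ by $A_{k,n}(r)$ from \eqref{An} on both sides, uniformly in $n\ge1$. By Theorem~\ref{thm harmonic moments Zn}, $\mathbb{E}_k[Z_n^{-r}]/A_{k,n}(r)\to C(k,r)\in(0,\infty)$; since for each fixed $n$ one has $0<\mathbb{E}_k[Z_n^{-r}]\le1$ (because $Z_n\ge1$ a.s.) and $A_{k,n}(r)\in(0,\infty)$, the ratio $\mathbb{E}_k[Z_n^{-r}]/A_{k,n}(r)$ is finite and strictly positive for every $n$, so its supremum $B_1'$ and infimum $B_2'$ over $n\ge1$ are finite and strictly positive. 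Setting $B_1=C_1B_1'$ and $B_2=C_2B_2'$ gives \eqref{majoration limsum Rn} and \eqref{majoration liminf Rn}.

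The step that requires the most care is the reduction identity $\mathbb{P}_k(R_n-m_n\in D)=\mathbb{E}_k[h(Z_n)]$, which rests on the branching/Markov property and on the independence of $\xi_n$ from $Z_n$; once this is in place, the two‑sided bound is immediate from Theorem~\ref{thm harmonic moments Zn}. I would also remark that the upper bound \eqref{majoration limsum Rn} alone does not really need the moment assumption behind Theorem~\ref{thm harmonic moments Zn}: the estimate $\mathbb{E}_k[Z_n^{-r}]\le B_1'A_{k,n}(r)$ follows already from the ($k$‑analogue of the) recursion \eqref{eq inequation Zn-r} and the boundedness of $(b_j)$, by summing the geometric series $\sum_j\gamma_k^{\,n-j}c_r^{\,j}$ according to whether $c_r<\gamma_k$, $c_r=\gamma_k$ or $c_r>\gamma_k$.
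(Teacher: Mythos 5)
Your proof is correct and follows essentially the same route as the paper: condition on the generation-$n$ data to write $\mathbb{P}_k(R_n-m_n\in D)=\sum_{j\ge k}\mathbb{P}(M_{0,j}-m_0\in D)\,\mathbb{P}_k(Z_n=j)$, apply the hypothesis to reduce to $\mathbb{E}_k[Z_n^{-r}]$, and invoke Theorem~\ref{thm harmonic moments Zn} (convergence of $\mathbb{E}_k[Z_n^{-r}]/A_{k,n}(r)$ to a positive finite limit, plus positivity and finiteness at each fixed $n$) to get uniform constants $B_1,B_2$. Only your closing aside is shaky: the boundedness of $(b_j)$ in the recursion is itself proved via harmonic moments of $W$ and hence is not free of moment hypotheses, but since that remark is inessential to your argument it does not affect the proof.
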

This  result shows that there exist some phase transitions in the rate of convergence depending on whether the value of $r$ is less than, equal or greater than the constant $\gamma_k$. 
The next result gives a bound of the large deviation probability of $R_n-m_n$ under a simple moment condition on $Z_1$.
\begin{theorem}
\label{thm LDR Rn 2}
Let $k \geq 1$. Assume that there exists $p>1$ such that $ \e  |Z_1-m_0|^p < \infty$.
Then, there exists a constant $C_p>0$ such that, for any $a>0$,
\begin{equation}
\p_k (|R_n - m_n |>a) \leq \left\{ \begin{array}{l l l }
C_p a^{-p} A_{k,n} (p-1) & \text{if}&  p \in (1, 2], \\
C_p a^{-p} A_{k,n} (p/2) & \text{if}&  p \in (2, \infty) .
\end{array}
\right.
\label{LDR001}
\end{equation}
\end{theorem}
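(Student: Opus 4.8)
\medskip
\noindent\textbf{Proof proposal.}
The plan is to deduce Theorem~\ref{thm LDR Rn 2} from Theorem~\ref{thm LDR Rn 1} by checking its hypothesis \eqref{majoration LGN MA} for the set $D=\{x\in\mathbb R:|x|>a\}$, with exponent $r=p-1$ when $p\in(1,2]$ and $r=p/2$ when $p\in(2,\infty)$. First recall that, by the branching property, conditionally on $\mathcal F_n$ the variables $N_{n,i}$ $(i\ge1)$ are i.i.d.\ with generating function $f_n$ and common mean $m_n$, so that with the notation \eqref{Mnj-001} and \eqref{eq Rn},
\[
R_n-m_n=\frac{1}{Z_n}\sum_{i=1}^{Z_n}\bigl(N_{n,i}-m_n\bigr)=M_{n,Z_n}-m_n ,
\]
which is exactly the situation handled by Theorem~\ref{thm LDR Rn 1}. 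Once \eqref{majoration LGN MA} is established with a constant $C_1$ proportional to $a^{-p}\,\e|Z_1-m_0|^{p}$, inequality \eqref{majoration limsum Rn} gives $\p_k(|R_n-m_n|>a)\le B_1 A_{k,n}(r)$; since the constant $B_1$ produced in the proof of Theorem~\ref{thm LDR Rn 1} depends on $C_1$ only through a multiplicative factor, it is of the form $C_p\,a^{-p}$ with $C_p$ depending only on $p$ and the reproduction law, which is precisely the bound \eqref{LDR001}.

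It remains to prove \eqref{majoration LGN MA}. By Markov's inequality at exponent $p$, and then conditioning on the environment,
\[
\p\bigl(|M_{0,j}-m_0|>a\bigr)\le a^{-p}\,j^{-p}\,\e\,\mathbb{E}_\xi\Bigl|\sum_{i=1}^{j}(N_{0,i}-m_0)\Bigr|^{p},
\]
where, under $\mathbb{P}_\xi$, the $(N_{0,i}-m_0)_{i\ge1}$ are i.i.d.\ centered. When $p\in(1,2]$ the von Bahr--Esseen inequality yields $\mathbb{E}_\xi|\sum_{i=1}^{j}(N_{0,i}-m_0)|^{p}\le 2j\,\mathbb{E}_\xi|N_{0,1}-m_0|^{p}$; taking the annealed expectation and using $\e|N_{0,1}-m_0|^{p}=\e|Z_1-m_0|^{p}$ gives $\p(|M_{0,j}-m_0|>a)\le 2a^{-p}\e|Z_1-m_0|^{p}\,j^{-(p-1)}$. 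When $p\in(2,\infty)$, Rosenthal's inequality (equivalently the Marcinkiewicz--Zygmund inequality) gives
\[
\mathbb{E}_\xi\Bigl|\sum_{i=1}^{j}(N_{0,i}-m_0)\Bigr|^{p}\le C_p\Bigl(j^{p/2}\bigl(\mathbb{E}_\xi(N_{0,1}-m_0)^2\bigr)^{p/2}+j\,\mathbb{E}_\xi|N_{0,1}-m_0|^{p}\Bigr);
\]
since $t\mapsto t^{p/2}$ is convex, Jensen's inequality bounds the annealed expectation of $(\mathbb{E}_\xi(N_{0,1}-m_0)^2)^{p/2}$ by $\e|Z_1-m_0|^{p}<\infty$, and $j^{p/2}\ge j$ for $j\ge1$, so after taking the annealed expectation one obtains $\p(|M_{0,j}-m_0|>a)\le C_p'\,a^{-p}\e|Z_1-m_0|^{p}\,j^{-p/2}$. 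In both regimes \eqref{majoration LGN MA} holds with the announced exponent $r$, which completes the plan.

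I do not expect a genuine obstacle: the statement is a corollary of Theorem~\ref{thm LDR Rn 1}, and the only points requiring care are (i) choosing the correct moment inequality in each range of $p$ so that the resulting polynomial decay matches the exponents $p-1$ and $p/2$ built into $A_{k,n}$ in \eqref{An}, and (ii) reducing the annealed second moment appearing in Rosenthal's bound to the assumed annealed $p$-th moment via Jensen. One could instead argue directly, conditioning on $\mathcal F_n$ and using that $Z_n$ is independent of $\xi_n$ under $\p_k$ together with the harmonic-moment estimates of Theorem~\ref{thm harmonic moments Zn}; but that route would import the extra hypothesis $\e[m_0^{r_k+\varepsilon}]<\infty$, whereas going through Theorem~\ref{thm LDR Rn 1} keeps the conclusion under the single condition $\e|Z_1-m_0|^{p}<\infty$.
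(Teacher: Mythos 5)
Your proposal is correct and follows essentially the same route as the paper: the paper likewise reduces to the bound $\p(|M_{0,j}-m_0|>a)\le C a^{-p}\,\e|Z_1-m_0|^{p}\,j^{-r}$ with $r=p-1$ or $r=p/2$ (via the quenched Marcinkiewicz--Zygmund inequality, which plays the role of your von Bahr--Esseen/Rosenthal bounds), and then sums over $\{Z_n=j\}$ and applies Theorem~\ref{thm harmonic moments Zn}, exactly the mechanism of Theorem~\ref{thm LDR Rn 1} that you invoke explicitly. The only cosmetic difference is that the paper writes out this decomposition directly instead of citing Theorem~\ref{thm LDR Rn 1}, so your argument and the paper's coincide in substance.
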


\section{Proof of main theorems} 
\label{sec preuve main thm}
In this section we will prove the main results of this paper, Theorems \ref{thm harmonic moments Zn} and \ref{thm PGD}, and the associated result, Proposition \ref{prop relation G Q psi BPRE}.
In Section \ref{sec aux results} we present some auxiliary results concerning the critical value for the existence of the harmonic moments of the r.v. $W$ and the asymptotic behavior of the asymptotic distriution $\p_k (Z_n = j)$ as $n \to \infty$, with $ j \geq k \geq 1$.
In Sections \ref{subsec preuve main thm} and \ref{subsec PGD} we prove respectively Theorems \ref{thm harmonic moments Zn} and \ref{thm PGD}.
The proof of Proposition \ref{prop relation G Q psi BPRE} is given in Section \ref{subsec preuve prop GW} for a Galton-Watson process and in Section \ref{subsec preuve prop} for a general BPRE. 

\subsection{Auxiliary results}
\label{sec aux results}
We recall some results to be used in the proofs. The first one concerns the critical value for the existence of harmonic moments of the r.v. $W$.
\begin{lemma}[\cite{glm2016asymptotic}, Theorem 2.1]
\label{thm harmonic moments W}
Assume 
that there exists a constant $p>0$ such that $\mathbb{E} \left[ m_0^{p} \right] < \infty $. Then for any $a\in (0,p)$,
\begin{equation*}
\mathbb{E}_k W^{-a} < \infty \quad \text{if and only if} \quad \mathbb{E} \left[ p_1^k (\xi_0) m_0^a \right] <1.
\end{equation*}
\end{lemma}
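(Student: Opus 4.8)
The plan is to reduce the statement to a fixed-point analysis of the Laplace transform $\phi_{k}$ of $W$ under $\mathbb{P}_k$, exploiting the branching structure. Recall that $W = \lim_n Z_n/\Pi_n$ satisfies, under $\mathbb{P}_\xi$ (starting from one individual), the almost sure identity $W = \frac{1}{m_0}\sum_{i=1}^{N_{0,1}} W^{(1)}_i$, where conditionally on $\xi$ the $W^{(1)}_i$ are i.i.d. copies of the limiting martingale for the shifted environment $T\xi$, independent of $N_{0,1}$. Taking quenched Laplace transforms this gives the functional equation $\phi_{\xi}(t) = f_0\big(\phi_{T\xi}(t/m_0)\big)$, and under the annealed law, $\phi_k(t) = \mathbb{E}[\phi_{\xi}^k(t)]$ with $\phi_\xi$ solving that equation. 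The first step is to convert the existence of $\mathbb{E}_k W^{-a}$ into an integrability statement for $\phi_k$ near $t=\infty$: by the standard formula $\mathbb{E}_k W^{-a} = \frac{1}{\Gamma(a)}\int_0^\infty t^{a-1}\phi_k(t)\,dt$ (valid since $p_0(\xi_0)=0$ forces $W>0$ a.s.), finiteness of the harmonic moment of order $a$ is equivalent to $\int_1^\infty t^{a-1}\phi_k(t)\,dt < \infty$, i.e. to a bound $\phi_k(t) = O(t^{-a-\delta})$ in an averaged/integrated sense.

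The second step is to control the decay of $\phi_\xi(t)$ as $t\to\infty$. Since $p_0=0$ and $\mathbb{P}(Z_1=1\mid\xi)=p_1(\xi_0)$, for large $t$ one expects $\phi_\xi(t)$ to behave essentially like $\gamma$-type contributions iterated along the environment: heuristically $f_0(s) \sim p_1(\xi_0) s$ as $s\to 0$, so iterating the functional equation $n$ times yields $\phi_\xi(t) \approx \Big(\prod_{i=0}^{n-1} p_1(\xi_i)\Big)\,\phi_{T^n\xi}\big(t/\Pi_n\big)$, and one stops the iteration at the first time $\Pi_n$ exceeds $t$. This is where the change of measure and the quantity $\mathbb{E}[p_1^k(\xi_0)m_0^a]$ enter: raising to the $k$-th power and taking expectations, the relevant multiplicative functional is $\mathbb{E}[p_1^k(\xi_0) m_0^a]$, and the associated random walk $\sum \log(p_1^k(\xi_i) m_0^{a})$ has negative drift precisely when $\mathbb{E}[p_1^k(\xi_0)m_0^a] < 1$, which makes the integral $\int t^{a-1}\phi_k(t)dt$ converge; when $\mathbb{E}[p_1^k(\xi_0)m_0^a] > 1$ the drift is positive and the integral diverges. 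Making the upper bound rigorous requires the sub-multiplicativity inequality $\phi_\xi(t) \le p_1(\xi_0) + (1-p_1(\xi_0))\phi_{T\xi}(t/m_0)$ type estimate coming from $f_0(s)\le p_1(\xi_0)s + (1-p_1(\xi_0))$ (or a more careful version using higher terms), iterated and combined with a renewal/large-deviation argument for the walk $S_n$; the assumption $\mathbb{E}[m_0^p]<\infty$ with $a<p$ provides the needed exponential moment so that the overshoot $\Pi_n/t$ at the stopping time is controlled.

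The main obstacle I expect is the lower bound, i.e. showing $\mathbb{E}_k W^{-a} = \infty$ when $\mathbb{E}[p_1^k(\xi_0)m_0^a] \ge 1$: one needs a matching lower estimate on $\phi_k(t)$, which is more delicate because one must produce a genuine lower bound on the event that many consecutive generations contain exactly one individual each. Concretely, $\mathbb{P}_\xi(Z_n = 1) = \prod_{i=0}^{n-1}p_1(\xi_i)$, so $\phi_\xi(t) \ge e^{-t}\prod_{i=0}^{n-1}p_1(\xi_i) \ge e^{-t/\Pi_n}\prod_{i=0}^{n-1}p_1(\xi_i)$ is too crude; instead one restricts to the event $\{W \le \varepsilon/\Pi_n\}$, whose probability is bounded below using $\mathbb{P}_\xi(Z_n=1)$ together with a lower bound on $\mathbb{P}_\xi(W \le \varepsilon \mid Z_n=1)$ coming from the positivity and continuity of the law of $W$ near $0$ under $\mathbb{P}$. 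Then $\mathbb{E}_k W^{-a} \ge c\,\mathbb{E}\big[\Pi_n^{a}\prod_{i=0}^{n-1}p_1^k(\xi_i)\big] = c\,\big(\mathbb{E}[p_1^k(\xi_0)m_0^a]\big)^n$, which is bounded away from $0$ (or grows) for every $n$ when $\mathbb{E}[p_1^k(\xi_0)m_0^a]\ge1$, forcing the harmonic moment to be infinite. Tightening the constants in this lower bound — in particular uniformity in $n$ of the bound $\mathbb{P}(W\le\varepsilon)>0$ and handling the overshoot — is the technically hardest part; the rest is bookkeeping with the functional equation and the renewal structure of $S_n$.
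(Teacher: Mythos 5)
Note first that the paper does not prove this lemma at all: it is imported verbatim from \cite{glm2016asymptotic} (Theorem 2.1), so there is no internal proof to compare with, and your proposal has to stand on its own. As it stands it has two genuine gaps, and the more serious one is in the direction you label as routine. For the sufficiency direction ($\mathbb{E}[p_1^k(\xi_0)m_0^a]<1 \Rightarrow \mathbb{E}_kW^{-a}<\infty$) you need an \emph{upper} bound on $\phi_k(t)$, but the iteration scheme you describe, $\phi_\xi(t)\approx\bigl(\prod_{i<n}p_1(\xi_i)\bigr)\phi_{T^n\xi}(t/\Pi_n)$, is only justified as a \emph{lower} bound, since $f_0(s)\geq p_1(\xi_0)s$; and the inequality you propose to make it rigorous, $\phi_\xi(t)\leq p_1(\xi_0)+(1-p_1(\xi_0))\phi_{T\xi}(t/m_0)$ (coming from $f_0(s)\leq p_1 s+(1-p_1)$), is useless because the constant term does not decay in $t$, so iterating it never produces any decay of $\phi_k$. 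The inequality that actually works is $f_0(s)\leq p_1(\xi_0)s+(1-p_1(\xi_0))s^2$, i.e. $\phi_\xi(t)\leq p_1\,\phi_{T\xi}(t/m_0)+(1-p_1)\,\phi_{T\xi}^2(t/m_0)$, and the quadratic term cannot be absorbed by a one-shot renewal argument: one has to run a bootstrap (establish finiteness of $\mathbb{E}_kW^{-a}$ for small $a$ and push the exponent up, using that the $s^2$-contribution behaves like a process started from $k+1$ individuals, whose critical exponent is strictly larger), with $\mathbb{E}[m_0^p]<\infty$, $a<p$, controlling the annealed expectations $\mathbb{E}[m_0^a\cdots]$ along the way. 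This closing idea is absent from your sketch, so the hard half of the equivalence is not actually proved.

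In the necessity direction your skeleton is essentially right but incomplete at the critical point. Restricting to $\{Z_n=k\}$ and using independence of $(\xi_0,\dots,\xi_{n-1})$ and $T^n\xi$ does give $\mathbb{E}_kW^{-a}\geq \varepsilon^{-a}\,\mathbb{P}_k(W\leq\varepsilon)\,\bigl(\mathbb{E}[p_1^k(\xi_0)m_0^a]\bigr)^n$, and $\mathbb{P}_k(W\leq\varepsilon)>0$ can be justified (under $\mu>0$ and $\mathbb{E}[p_1^km_0^a]\geq 1$ one necessarily has $\mathbb{P}(p_1(\xi_0)>0,\,m_0>1)>0$, so one can force $Z_n=k$ with $\Pi_n$ large and use Markov's inequality on the shifted $W$). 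When $\mathbb{E}[p_1^km_0^a]>1$ this diverges as $n\to\infty$ and you are done; but when $\mathbb{E}[p_1^km_0^a]=1$ your bound is a constant uniformly in $n$, and "bounded away from $0$ for every $n$" does \emph{not} force $\mathbb{E}_kW^{-a}=\infty$, whereas the lemma claims infiniteness at equality. To handle the boundary case you need contributions that can be summed over $n$, e.g. the events $B_n=\{Z_n=k<Z_{n+1}\}$, which are pairwise disjoint because $p_0(\xi_0)=0$ makes $(Z_n)$ nondecreasing; each $B_n$ carries mass of order $\bigl(\mathbb{E}[p_1^km_0^a]\bigr)^n$ times a constant, and summing the series gives the divergence at the critical value. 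Without such a refinement the "if and only if" is only proved off the critical case.
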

The second result is about the asymptotic equivalent of the probability $\p_k (Z_n = j)$ as $n \to \infty$, for any $ j \geq k \geq 1$.
\begin{lemma}[\cite{glm2016asymptotic}, Theorem 2.3]
Assume that $\p(Z_1=1)>0$. For any $k\geq 1$  the following assertions hold.
\begin{enumerate}[ref=\arabic*, leftmargin=*, label=\arabic*.]
\item[a)] \label{thm small value probability 2}
 For any accessible state $ j \geq k$ in the sense that $\mathbb{P}_k (Z_l=j)>0$ for some $l \geq 0$, we have  
\begin{equation}
\label{small value asymptotic 2}
\mathbb{P}_k \left( Z_n = j \right) \underset{n \to \infty}{\sim} \gamma_k^n q_{k,j},
\end{equation}
where $q_{k,k}= 1$ and, for  $j>k$,  $q_{k,j} \in (0, + \infty )$ is the solution of the recurrence relation
\begin{equation}
\label{relation rec qkj}
\gamma_k q_{k,j} = \sum_{i=k}^j p(i, j) q_{k, i},
\end{equation}
with $q_{k,i}=0$ for any non-accessible state $i$ (i.e.\  $\mathbb{P}_k (Z_l=i)=0$ for all $l \geq 0$).
\item[b)] Assume 
that  there exists $\varepsilon>0$ such that $\mathbb{E} [ m_0^{r_k+ \varepsilon} ] < \infty $. Then, for any $r>r_k$, we have 
\begin{equation}
\label{eq decroissance qkj}
\sum_{j=k}^{\infty} j^{-r} q_{k,j} < \infty. 
\end{equation}
In particular, the radius of convergence of the power series
\begin{equation}
Q_k (t) = \sum_{j=k}^{+ \infty} q_{k,j} t^j
\end{equation}
is equal to 1. 
\item[c)] For all $t \in [0, 1)$ and $k \geq 1$, we have,
\begin{equation}
\label{cv Qnk ->Qk}
 \frac{G_{k, n} (t)}{\gamma^n_k} \uparrow Q_k(t) \ \ \text{as} \ \ n \to \infty,
\end{equation}
where $G_{k,n}$ is the probability generating function of $Z_n$ when $Z_0=k$, defined in \eqref{Gkn}.
\item[d)] $Q_k (t)$ is the unique power series which verifies the functional equation
\begin{equation}
\label{relation Q_k}
\gamma_k Q_k (t) = \mathbb{E} \left[ Q_k ( f_0 (t) ) \right], \ \ t\in [0,1),
\end{equation}
with the condition $Q_k^{(k)} (0) = 1.$
\end{enumerate}
\end{lemma}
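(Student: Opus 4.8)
Although this lemma is quoted from \cite{glm2016asymptotic}, let me sketch the line of argument I would follow, since it makes transparent where $\gamma_k$ and $r_k$ come from.

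\emph{Assertion a).} The basic tool is the one-step identity $G_{k,n+1}(t)=\e\left[G_{k,n}(f_0(t))\right]$, which follows from $g_{n+1}=g_n\circ f_n$, the independence of $f_n$ from $(\xi_0,\dots,\xi_{n-1})$, and $f_n\overset{d}{=}f_0$. Writing $p(i,j)=\p_i(Z_1=j)$, so that $p(i,i)=\e[p_1^i(\xi_0)]=\gamma_i$ in the notation of \eqref{relation rec qkj}, and matching coefficients of $t^j$, this becomes the triangular renewal recursion
\begin{equation*}
\p_k(Z_{n+1}=j)=\sum_{i=k}^{j}p(i,j)\,\p_k(Z_n=i),\qquad j\ge k .
\end{equation*}
Since $p_0(\xi_0)=0$ forces $Z_n\ge k$ a.s., one has $\p_k(Z_n=k)=\prod_{l=0}^{n-1}\e[p_1^k(\xi_l)]=\gamma_k^n$, i.e. $q_{k,k}=1$. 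For an accessible $j>k$ I would induct on the finitely many accessible states in $[k,j]$: with $v_n(i)=\gamma_k^{-n}\p_k(Z_n=i)$ the recursion reads $v_{n+1}(j)=(\gamma_j/\gamma_k)v_n(j)+\gamma_k^{-1}\sum_{i=k}^{j-1}p(i,j)v_n(i)$, and since $r_k<\infty$ (which is implicit whenever the moment hypothesis of b) is used) the event $\{0<p_1(\xi_0)<1\}$ has positive probability, so $\gamma_j/\gamma_k\in(0,1)$. Using that a recursion $x_{n+1}=\alpha x_n+\varepsilon_n$ with $|\alpha|<1$ and $\varepsilon_n\to L$ forces $x_n\to L/(1-\alpha)$, and that $\varepsilon_n\to\gamma_k^{-1}\sum_{i<j}p(i,j)q_{k,i}$ by the induction hypothesis, we get $v_n(j)\to q_{k,j}$ with $q_{k,j}$ solving \eqref{relation rec qkj}; strict positivity of $q_{k,j}$ for accessible $j$ follows by iterating \eqref{relation rec qkj} along an accessibility path and using $\gamma_k q_{k,j}\ge p(i,j)q_{k,i}$.

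\emph{Assertion b), the crux.} The elementary inequality $G_{k,n}(t)\ge\p_k(Z_n=k)t^k=\gamma_k^nt^k$, fed into the one-step identity, gives by induction on $n$ that $\gamma_k^{-n}G_{k,n}(t)$ increases to a limit $L(t)$, which is $\ge\sum_j q_{k,j}t^j$ by Fatou. From $e^{-x}\le(r/e)^rx^{-r}$ one has $\gamma_k^{-n}G_{k,n}(e^{-t})\le(r/e)^rt^{-r}\,\gamma_k^{-n}\e_k[Z_n^{-r}]$, so both the finiteness of $L$ on $[0,1)$ and, via $\sum_j j^{-r}q_{k,j}=\Gamma(r)^{-1}\int_0^\infty\sum_j q_{k,j}e^{-jt}t^{r-1}dt\le\Gamma(r)^{-1}\int_0^\infty L(e^{-t})t^{r-1}dt=\sup_n\gamma_k^{-n}\e_k[Z_n^{-r}]$, the summability \eqref{eq decroissance qkj}, reduce to the single estimate $\sup_n\gamma_k^{-n}\e_k[Z_n^{-r}]<\infty$ for $r>r_k$ (the radius of convergence being exactly $1$ because $\sum_j q_{k,j}=L(1^-)=+\infty$). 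To prove that estimate I would split the first step according to $\{Z_1=k\}$ versus $\{Z_1>k\}$: the branching property and the independence of the first-generation subtrees from $Z_1$ given $\xi$ give
\begin{equation*}
\e_k[Z_{n+1}^{-r}]=\gamma_k\,\e_k[Z_n^{-r}]+R_n,\qquad 0\le R_n\le(1-\gamma_k)\,\e_{k+1}[Z_n^{-r}],
\end{equation*}
the last bound using $\bigl(\sum_{i=1}^{l}y_i\bigr)^{-r}\le\bigl(\sum_{i=1}^{k+1}y_i\bigr)^{-r}$ for $l\ge k+1$; thus $\gamma_k^{-n}\e_k[Z_n^{-r}]$ is bounded once $\sum_n\gamma_k^{-n}\e_{k+1}[Z_n^{-r}]<\infty$. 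Iterating in $k$ and using $r_k\uparrow\infty$, after finitely many steps one reaches a level $K>k$ with $r<r_K$; there the decay switches regime and one uses the tilted measure \eqref{changement de mesure}, writing $\e_K[Z_n^{-r}]=c_r^n\,\e_K^{(r)}[W_n^{-r}]$ with $c_r<\gamma_{K-1}\le\gamma_k$, while $\e_K^{(r)}[W_n^{-r}]$ is a bounded $\p_K^{(r)}$-submartingale because, since $r<r_K$, Lemma \ref{thm harmonic moments W} applied under $\p^{(r)}$ gives $\e_K^{(r)}[W^{-r'}]<\infty$ for $r'$ slightly above $r$, the needed integrability of $m_0$ coming from $\e[m_0^{r_k+\varepsilon}]<\infty$. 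Propagating these bounds back up the chain (each step is a geometric convolution, so no extra polynomial factor accumulates) closes the estimate.

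\emph{Assertions c) and d).} With $L(t)<\infty$ for $t<1$ in hand from b), monotone convergence in the one-step identity yields $\gamma_k L(t)=\e[L(f_0(t))]$; expanding in powers of $t$ and matching with \eqref{relation rec qkj}, using that the $t^k$-coefficient of $L$ is $1$, identifies $L=Q_k$, which is the convergence in c) and also shows $Q_k$ satisfies \eqref{relation Q_k}. Uniqueness in d) is then immediate, since the coefficients of any power series solving \eqref{relation Q_k} with $t^k$-coefficient $1$ obey the recursion \eqref{relation rec qkj}, which determines them because $\gamma_j\neq\gamma_k$ for $j>k$. The genuinely non-routine point of the whole proof is b): controlling the escape term $R_n$ and the uniform integrability of $W_n^{-r}$ under the tilted law, i.e. quantifying how a large jump away from $k$ annihilates the subsequent harmonic moments; everything else (the renewal recursion in a), the monotonicity in c), the uniqueness in d)) is elementary.
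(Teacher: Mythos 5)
A preliminary remark: this lemma is not proved in the paper at all — it is quoted from \cite{glm2016asymptotic} (Theorems 2.1--2.3 there) and used as an input to Theorem \ref{thm harmonic moments Zn} — so there is no in-paper proof to compare you against. Judged on its own terms, your reconstruction has a sound architecture, and it is interesting that it runs the paper's machinery in the reverse direction: you reduce \eqref{eq decroissance qkj} to the bound $\sup_n\gamma_k^{-n}\e_k[Z_n^{-r}]<\infty$ for $r>r_k$, and you prove that bound with exactly the ingredients the paper uses to deduce Theorem \ref{thm harmonic moments Zn} \emph{from} the lemma: the first-step recursion $\e_k[Z_{n+1}^{-r}]=\gamma_k\e_k[Z_n^{-r}]+R_n$ as in \eqref{eq iteration Zn}, the comparison $\e_i[Z_n^{-r}]\le\e_{k+1}[Z_n^{-r}]$ as in \eqref{eq Ei leq Ek}, the tilted measure \eqref{changement de mesure}, and Lemma \ref{thm harmonic moments W} under $\p^{(r)}$. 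There is no circularity, since your estimate never invokes Theorem \ref{thm harmonic moments Zn}; and the small inaccuracy in the back-propagation (a factor $n$ \emph{does} appear at an intermediate level $j$ when $r=r_j$) is harmless because $\gamma_{j}<\gamma_{j-1}$ strictly absorbs polynomial factors.

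Two points, however, need repair. First, your proof of a) requires $\gamma_j<\gamma_k$ for $j>k$, i.e. $\p\bigl(0<p_1(\xi_0)<1\bigr)>0$, whereas the statement as quoted assumes only $\p(Z_1=1)>0$; this is not a cosmetic gap, because without nondegeneracy the conclusion itself fails. Take the environment equal to $f(s)=s$ or $f(s)=s^2$ with probability $1/2$ each: then $\gamma_1=\gamma_2=\tfrac12$, $\p_1(Z_n=2)=n\,2^{-n}$, so $\gamma_1^{-n}\p_1(Z_n=2)\to\infty$, and \eqref{relation rec qkj} reads $\tfrac12 q_{1,2}=\tfrac12+\tfrac12 q_{1,2}$, which has no finite solution. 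So either the result is being used under additional standing assumptions of \cite{glm2016asymptotic}, or the nondegeneracy hypothesis (equivalently, finiteness of the $r_j$, as you note in passing) must be stated explicitly; your argument correctly locates where it enters, but for part a) it is an extra hypothesis, not a consequence of $\p(Z_1=1)>0$. Second, the logic in b)--c) is tangled: you justify the radius of convergence by $\sum_j q_{k,j}=L(1^-)=\infty$, which presupposes $L=Q_k$, i.e. part c); and in c), ``expanding $L$ in powers of $t$'' is not automatic, since an increasing limit of power series need not have coefficients equal to the limits of the coefficients. The standard fix: once $L(t_0)<\infty$ for $t<t_0<1$, dominate $\gamma_k^{-n}\p_k(Z_n=j)t^j\le L(t_0)(t/t_0)^j$ and use a) with dominated convergence to get $L(t)=\sum_j q_{k,j}t^j$ on $[0,1)$, which gives \eqref{cv Qnk ->Qk} and \eqref{relation Q_k}; the radius statement then follows from $L(t)\ge\gamma_k^{-n}G_{k,n}(t)\to\gamma_k^{-n}$ as $t\uparrow1$ for each $n$. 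With these two repairs (and the uniqueness in d) phrased for power series with nonnegative coefficients, again using $\gamma_j<\gamma_k$ for $j\neq k$), your sketch is a correct outline.
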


\subsection{Proof of Theorem \ref{thm harmonic moments Zn}}
\label{subsec preuve main thm}
In this section we give a proof of the convergence of the 
normalized harmonic moments $\mathbb{E}_k[Z_n^{-r}] / A_{k,n}(r) $ as $n \to \infty,$ where $A_{k,n}(r)$ is defined in \eqref{An}.

a) We first consider the case when $r<r_k$  (which corresponds to the case $\gamma_k < c_r$).
By the change of measure \eqref{changement de mesure}, we obtain
\begin{equation}
\label{changement mesure EZn = EWn cr}
\e_{k} \left[ Z_n^{-r} \right] = \mathbb{E}_k^{(r)} [ W_n^{-r} ] c_r^n,
\end{equation}
with $c_r = \mathbb{E} m_0^{-r}$.
From \eqref{cv L1 Pr W} and \cite[Lemma 2.1]{liu}, it follows that the sequence $(\e_k^{(r)} [ W_n^{-r} ] )$ is increasing and
\begin{equation}
\label{EWn croissant 1}
\lim_{n \to \infty} \mathbb{E}_k^{(r)} [ W_n^{-r} ] = \sup_{n \in \mathbb{N}} \mathbb{E}_k^{(r)} [ W_n^{-r} ] = \mathbb{E}_k^{(r)} [ W^{-r} ].
\end{equation}
Moreover, for any $r<r_k$, we have $\gamma_k < c_r$, which implies that $\mathbb{E}^{(r)} [ p_1^{k} (\xi_0) m_0^{r} ] = \gamma_{k}/ \e m_0^{-r}<1$.
So, by Lemma \ref{thm harmonic moments W}, we get, for any $r<r_k$,
\begin{equation}
\label{W-r existe 1}
\mathbb{E}_k^{(r)} [ W^{-r} ]  < \infty .
\end{equation}
Therefore, coming back to \eqref{changement mesure EZn = EWn cr} and using \eqref{EWn croissant 1} and \eqref{W-r existe 1}, we obtain
\begin{equation}
\label{eq C(k,r) r<r_k}
\mathbb{E}_k \left[ Z_{n}^{-r} \right] c_r^{-n} \underset{ n \to \infty}{\uparrow} \mathbb{E}_k^{(r)} \left[ W^{-r} \right] \in (0, \infty).
\end{equation}
To give an integral expression of the limit constant $C(k,r)$, we shall use the following expression for the inverse of a positive random variable $X^r$: for any $r>0$, we have
\begin{equation}
\label{eq expression X^-r}
\frac{1}{X^r} = \frac{1}{\Gamma (r)} \int_0^{+ \infty}  e^{-u X}  u^{r-1} du.
\end{equation}
Then, from \eqref{eq C(k,r) r<r_k}, \eqref{eq expression X^-r} and Fubini's theorem,  we get
\begin{equation}
\label{eq C(k,r) r<r_k 2}
\mathbb{E}_k \left[ Z_{n}^{-r} \right] c_r^{-n} \underset{ n \to \infty}{\uparrow} \frac{1}{\Gamma(r)} \int_0^{\infty} \phi_k^{(r)} (u) u^{r-1} du,
\end{equation}
which proves \eqref{equivalent Z_n} for $r<r_k$.

b) Next we consider the case when $r>r_k$ (which corresponds to the case $ \gamma_k > c_r$).
Using parts a) and b) of Lemma \ref{thm small value probability 2} and the monotone convergence theorem, it follows that
\begin{eqnarray}
\label{eq C(k,r) r>r_k}
\lim_{n \to \infty} \uparrow \frac{\e_k [Z_n^{-r}]}{\gamma_k^n} &=& \lim_{n \to \infty} \uparrow \sum_{j=k+1}^{\infty} k^{-r} \frac{\p (Z_n =k)}{\gamma_k^n} \notag \\
&=&  \sum_{j=k+1}^{\infty} k^{-r} q_{k,j} < \infty.
\end{eqnarray}
Moreover, using \eqref{eq expression X^-r} together with Fubini's theorem and the change of variable $ju=t$, we obtain
\begin{eqnarray*}
\frac{1}{\Gamma (r)} \int_0^1  Q_k (e^{-u} ) u^{r-1} du &=& \sum_{j=k}^{\infty} q_{k,j} j^{-r} \frac{1}{\Gamma (r)} \int_0^{\infty} e^{-t} t^{r-1} dt = \sum_{j=k}^{\infty} q_{k,j} j^{-r}.
\end{eqnarray*}
Therefore, coming back to \eqref{eq C(k,r) r>r_k}, we get
\begin{equation}
\label{eq C(k,r) r>r_k 2}
\lim_{n \to \infty} \uparrow \frac{\e_k [Z_n^{-r}]}{\gamma_k^n} = \frac{1}{\Gamma (r)} \int_0^1  Q_k (e^{-u} ) u^{r-1} du,
\end{equation}
which proves \eqref{equivalent Z_n} for $r>r_k$.

c) Now we consider the case when $r=r_k$ (which corresponds to $r= \gamma_k$). For $n \geq 1$ and $m \geq 0$, we have the following well-known branching property for $Z_n$:
\begin{equation}
\label{decomposition Zn1}
Z_{n+m} = \sum_{i=1}^{Z_m} Z_{n,i}^{(m)},
\end{equation}
where the r.v.'s $Z_{n,i}^{(m)}$ $( i \geq 1) $ are independent of $Z_m$ under $\mathbb{P}_{\xi}$ and $\p$.
Moreover, under $\p_\xi$, for each $n \geq 0$, the r.v.'s $Z_{n,i}^{(m)}$ $( i \geq 1) $ are i.i.d.\ with the same conditional probability law  $ \mathbb{P}_{\xi} \left( Z_{n,i}^{(m)} \in \cdot \right)= \mathbb{P}_{T^m \xi} \left( Z_n \in \cdot \right) $, where  $T^m$ is the shift operator defined by $T^m (\xi_0, \xi_1 , \ldots ) = (\xi_m, \xi_{m+1} , \ldots )$. 
Intuitively, relation \eqref{decomposition Zn1} shows that, conditionally on $Z_m=i$, the annealed law of the process $Z_{n+m}$ 
is the same as that of a new process $Z_n $ starting with $i$ individuals.
Using  \eqref{decomposition Zn1} with $m=1$, 
we obtain
\begin{eqnarray}
\label{eq induction 1}
\mathbb{E}_k \left[ Z_{n+1}^{-r} \right] 
&=& \mathbb{E}_k \left[ Z_{n}^{-r}\right] \p_k ( Z_1 = k) + \sum_{i=k+1}^{\infty} \mathbb{E}_i \left[ Z_n^{-r}  \right] \p_k ( Z_1 =i) .
\end{eqnarray}
From \eqref{changement mesure EZn = EWn cr}, we have
$
\e_{i} \left[ Z_n^{-r} \right] =\mathbb{E}_i^{(r)} [ W_n^{-r} ] c_r^n .
$
Substituting this into \eqref{eq induction 1} and setting 
\[
b_{n} = \sum_{i=k+1}^{\infty} \mathbb{E}_i^{(r)} [ W_n^{-r} ] \p_k ( Z_1 =i) ,
\] we get
\begin{equation}
\label{eq iteration Zn}
\e_k [ Z_{n+1}^{-r}] = \e_k [ Z_n^{-r}] \gamma_k + b_n c^n_r,
\end{equation}
with $\gamma_k = \p_k (Z_1=k)$. Iterating \eqref{eq iteration Zn} leads to
\begin{equation}
\label{induction exacte EZn-r}
\mathbb{E}_{k} \left[ Z_{n+1}^{-r} \right] = \gamma_{k}^{n+1}k^{-r}  +  \sum_{j=0}^{n} \gamma_{k}^{n-j} b_j c_r^j.
\end{equation}
Using the fact that $r=r_k$ (which corresponds to $\gamma_k=c_{r}$) and dividing \eqref{induction exacte EZn-r} by $\gamma^{n+1}n$, we get
\begin{eqnarray}
\label{eq cas r=rk}
\frac{\mathbb{E}_k \left[ Z_{n+1}^{-r} \right]}{n \gamma^{n+1}} = \frac{k^{-r}}{n} + \frac{\gamma_k^{-1}}{n} \sum_{j=0}^{n} b_j.
\end{eqnarray}
To prove the convergence in \eqref{eq cas r=rk} we need to show that $\lim_{n \to \infty} b_n < \infty.$ By \eqref{EWn croissant 1} and the monotone convergence theorem, we have
\begin{equation}
\label{bn}
b:= \lim_{n \to \infty} \uparrow b_n = \sum_{i=k+1}^{\infty} \mathbb{E}_i^{(r)} [ W^{-r} ] \p_k ( Z_1 =i).
\end{equation}
Now we show that $b < \infty$. Using \eqref{decomposition Zn1} for $m=0$ and $Z_0=i$, with $i >k$, and the fact that $Z_{n,j} >0$ for all $1 \leq j \leq i$, we obtain
\begin{equation} 
\label{eq Ei leq Ek}
\e_{i}[Z_n^{-r}]= \e \left[ \left( Z_{n,1} + \ldots + Z_{n,i} \right)^{-r} \right] \leq \e \left[ \left( Z_{n,1} + \ldots + Z_{n,k} \right)^{-r} \right] = \e_{k}[Z_n^{-r}].
\end{equation}
By \eqref{eq Ei leq Ek} and the change of measure \eqref{changement mesure EZn = EWn cr}, we get, for any $i \geq k+1$,
\[ 
\mathbb{E}_i^{(r)} [W_n^{-r}] \leq \mathbb{E}_{k+1}^{(r)} [W_n^{-r}].
\]
Then, as in \eqref{EWn croissant 1}, letting $n \to \infty$ leads to
\begin{equation}
\label{ Ei W leq Ek+1 W}
\mathbb{E}_i^{(r)} [W^{-r}] \leq \mathbb{E}_{k+1}^{(r)}[W^{-r}].
\end{equation}
Now we shall prove that 
\begin{equation}
\label{W-r existe 2}
\mathbb{E}_{k+1}^{(r)} [ W^{-r} ] < \infty.
\end{equation} 
For this it is enough to verify the condition of Lemma \ref{thm harmonic moments W} under the measure $\e_k^{(r)}$ defined by \eqref{changement de mesure}:
indeed, for $r=r_k$ we have $\gamma_k= c_r$, which implies that
\[
\mathbb{E}^{(r)} [ p_1^{k+1} (\xi_0) m_0^{r} ] = \frac{\gamma_{k+1}}{\e m_0^{-r}} = \frac{\gamma_{k+1}}{\gamma_k}<1.
\]
This proves \eqref{W-r existe 2}. Using \eqref{ Ei W leq Ek+1 W} and \eqref{W-r existe 2}, we obtain
\[
b = \sum_{i=k+1}^{\infty} \mathbb{E}_i^{(r)} [ W^{-r} ] \p_k ( Z_1 =i) \leq \mathbb{E}_{k+1}^{(r)} [ W^{-r} ] \sum_{j=k+1}^{\infty} \p_k (Z_1=j)  < \infty.
\]
Therefore, coming back to \eqref{eq cas r=rk}, using \eqref{bn} and Cesaro's lemma, we get
\begin{equation}
\label{eq C(k,r) r=r_k}
\lim_{n \to \infty} \frac{\mathbb{E}_k \left[ Z_{n+1}^{-r} \right]}{n \gamma_k^{n+1}} = \frac{1}{\gamma_k}\sum_{i=k+1}^{\infty} \mathbb{E}_{i}^{(r)}[ W^{-r} ] \p_k ( Z_1 =i) < \infty,
\end{equation}
which proves \eqref{equivalent Z_n} for $r=r_k$,  with
\[
C(k,r) =\frac{1}{\gamma_k}\sum_{i=k+1}^{\infty} \mathbb{E}^{(r)}_{i} [ W^{-r} ] \p_k ( Z_1 =i).
\]
We now show an integral expression of the constant $C(k,r)$.
Recall that $W$ admits the following decomposition
\begin{equation}
\label{eq decomposition W Z0}
W= \sum_{j=1}^{Z_0} W(j),
\end{equation}
where the r.v.'s $W(j)$   $(j = 1, 2, \dots)$  are independent of $Z_0$ and $m_0$ under $\p_{\xi}$ and $\p$.
Moreover, conditionally on the environement $\xi$,
the r.v.'s $W(j)$   $(j = 1, 2, \dots)$ are i.i.d. with common law  $ \p_{\xi} ( W(j) \in \cdot ) = \p_{\xi} ( W \in \cdot )$. 
With these considerations, it can be easily seen that 
\begin{equation}
\label{eq relation Psik psi}
\phi_i^{(r)} (t) =\e_i^{(r)}[\phi_{\xi} (u)] =
\e^{(r)} [\phi_{\xi} (u)^{i}].
\end{equation}
Therefore, using \eqref{eq expression X^-r} with $r=r_k$, together with \eqref{eq relation Psik psi} and  Fubini's theorem, we obtain
\begin{eqnarray}
\label{expression integrale C(k, rk)}
\frac{1}{\gamma_k}\sum_{i=k+1}^{\infty} \mathbb{E}^{(r)}_{i} [ W^{-r} ] \p_k ( Z_1 =i) &=&  \frac{1}{\gamma_k \Gamma (r)} \sum_{i=k+1}^{\infty} \mathbb{E}^{(r)}_{i} \int_{0}^{\infty} e^{-u W} u^{r-1} du \ \p_k ( Z_1 =i) \notag\\
&=&  \frac{1}{\gamma_k \Gamma (r)} \sum_{i=k+1}^{\infty} \mathbb{E}^{(r)} \int_{0}^{\infty} \phi_{\xi}^i (u) u^{r-1} du \ \p_k ( Z_1 =i) \notag\\
&=&  \frac{1}{\gamma_k\Gamma (r)}  \mathbb{E}^{(r)} \int_{0}^{\infty} \sum_{i=k+1}^{\infty} \phi_{\xi}^i (u) \ \p_k ( Z_1 =i) u^{r-1} du \notag \\
&=&  \frac{1}{\gamma_k \Gamma (r)}  \mathbb{E}^{(r)} \left[ \int_{0}^{\infty} \bar{G}_{k,1} ( \phi_{\xi} (u) )  u^{r-1} du \right],
\end{eqnarray}
where $\bar{G}_{k,1} (u) = {G}_{k,1} (u) - \gamma_k u^k = \sum_{j=k+1}^{\infty } u^j \p (Z_1=j) $.
Therefore, using \eqref{eq C(k,r) r=r_k} and \eqref{expression integrale C(k, rk)}, we get
\[
\lim_{n \to \infty} \frac{\mathbb{E}_k \left[ Z_{n+1}^{-r} \right]}{n \gamma_k^{n+1}} = \frac{1}{\gamma_k \Gamma (r)}  \mathbb{E}^{(r)} \left[ \int_{0}^{\infty} \bar{G}_{k,1} ( \phi_{\xi} (u) )  u^{r-1} du \right],
\]
which ends the proof of Theorem \ref{thm harmonic moments Zn}.

\subsection{Proof of Theorem \ref{thm PGD}}
\label{subsec PGD}

In this section we prove Theorem \ref{thm PGD}. For convenience, let $\lambda_k=-r_k$. From Theorem \ref{thm harmonic moments Zn}, for any $k \geq 1$, we have 
\begin{equation}
\label{eq appl Garnter Ellis}
\lim_{n \to \infty} \frac{1}{n} \log \e_k [Z_n^{\lambda}] = \chi_k (\lambda) = 
\left\{ 
\begin{array}{l l}
\log \gamma_k & \text{if} \ \lambda \leq \lambda_k, \\ 
\Lambda (\lambda) & \text{if} \ \lambda \in [ \lambda_k, 0]. 
\end{array}
\right. 
\end{equation}
Thus using a version of the  G\"artner-Ellis theorem adapted to the study of tail probabilities (see \cite[Lemma 3.1]{liu}) and the fact that $\chi_k (\lambda) = \log \gamma_k$ for all $\lambda \leq \lambda_k$, we obtain, for all $\theta \in (0, \e [X])$,
\begin{equation}
\lim_{n \to \infty} - \frac{1}{n} \p_k ( Z_n \leq e^{\theta n} ) = \chi^* (\theta),
\end{equation}
with
\begin{equation}
\label{Gamma * 000}
\chi_k^*(\theta) = \sup_{\lambda \leq 0} \left\{ \lambda \theta - \chi_k (\lambda) \right\}
= \max \left\{ \lambda_k \theta - \Lambda (\lambda_k) , \sup_{ \lambda_k \leq \lambda \leq 0} \left\{ \lambda \theta - \Lambda (\lambda) \right\} \right\}.
\end{equation}
It is well-known (see e.g. \cite[Lemma 2.2.5]{dembo1998large}) that the function
\[
\Lambda^* (\theta) = \sup_{\lambda \leq 0} \left\{ \lambda \theta - \Lambda (\lambda) \right\} =  \left\{ \lambda_{\theta} \theta - \Lambda (\lambda_{\theta} ) \right\}, \quad \text{with} \quad \Lambda'(\lambda_{\theta}) = \theta,
\]
is non-increasing for $\theta \in (0, \e [X])$.
Therefore, letting
\begin{equation}
\theta_k=\Lambda'(\lambda_k),
\end{equation}
it follows that:
\begin{enumerate}
\item for any $\theta \in (0, \theta_k]$, 
\[
\lambda_k \theta - \Lambda (\lambda_k) \geq \lambda_k \theta_k - \Lambda (\lambda_k) = \Lambda^* (\theta_k) = \sup_{ \lambda_k \leq \lambda \leq 0} \left\{ \lambda \theta - \Lambda (\lambda) \right\} ;
\]
\item for any $\theta \in [\theta_k, \mu)$,
\[
\Lambda^* (\theta) = \sup_{ \lambda_k \leq \lambda \leq 0} \left\{ \lambda \theta - \Lambda (\lambda) \right\} \geq \Lambda^* (\theta_k) = \lambda_k \theta_k - \Lambda (\lambda_k) \geq \lambda_k \theta - \Lambda (\lambda_k) .
\]
\end{enumerate}
With these considerations, we get from \eqref{Gamma * 000} that
\begin{equation}
\label{eq appl Garnter Ellis}
\chi_k^*(\theta) = 
\left\{ 
\begin{array}{l l}
\lambda_k \theta - \Lambda (\lambda_k) & \text{if} \ \theta \in (0, \theta_k], \\ 
\Lambda^* (\theta) & \text{if} \ \theta \in [\theta_k, \e [X]),
\end{array}
\right. 
\end{equation}
which ends the proof of Theorem \ref{thm PGD}.

\subsection{Proof of Proposition \ref{prop relation G Q psi BPRE} for the Galton-Watson case}
\label{subsec preuve prop GW}
In this section we assume that $(Z_n)$ is a Galton-Watson process and prove \eqref{eq egalite intro}, which is a particular but simpler case of Proposition \ref{prop relation G Q psi BPRE}.
\begin{proof} 
First note that for the Galton-Watson case, we have
\begin{equation}
\gamma m^r_1 =1.
\end{equation}
For convenience, we shall write $r=r_1$.
Using the additive property of integration and the change of variable $u=tm^k$ for $k \geq 0$, together with Fubini's theorem and the fact that $\gamma m^r =1$, we obtain
\begin{eqnarray}
\label{eq rel1}
\frac{1}{\gamma}  \int_{1}^{\infty} \bar{G}( \phi (u) )  u^{r-1} du &=& \frac{1}{\gamma} \sum_{k=0}^{\infty} \int_{m^k}^{m^{k+1}} \bar{G}( \phi (u) )  u^{r-1} du \notag \\
&=& \frac{1}{\gamma} \sum_{k=0}^{\infty} \int_{1}^{m} \bar{G}( \phi (t m^k) ) (m^r)^k t^{r-1} dt \notag \\
&=& \frac{1}{\gamma}  \int_{1}^{m} \sum_{k=0}^{\infty} \gamma^{-k} \bar{G}( \phi (t m^k) )  t^{r-1} dt.
\end{eqnarray}
Since $\bar G (t) = G(t) - \gamma t$ and $G (\phi (t) ) = \phi (t m )$, we obtain, for any $k \geq 0$,
\begin{eqnarray}
\label{eq telescoping}
\gamma^{-k} \bar G (\phi (t m^k) ) &=& \gamma^{-k} G (\phi (t m^k) ) - \gamma^{-k} \gamma \phi ( t m^k) \notag\\
                       &=& \gamma^{-k} G^{\circ k+1}(\phi (t) ) - \gamma^{k-1} G^{\circ k} ( \phi (t) ) .
\end{eqnarray}
By \eqref{eq telescoping}, using a telescoping argument and the fact that $ \lim_{k \to \infty} \gamma^{-k} G^{\circ k} (t)= Q (t)$, we get 
\begin{eqnarray}
\label{001}
\sum_{k=0}^{\infty} \gamma^{-k} \bar{G}( \phi (t m^k) ) &=& \gamma Q(\phi (t)) - \gamma \phi (t).
\end{eqnarray}
Therefore,  coming back to \eqref{eq rel1} and using \eqref{001}, we have
\begin{equation}
\label{eq relation 1}
\frac{1}{\gamma}  \int_{1}^{\infty} \bar{G}( \phi (u) )  u^{r-1} du = \int_{1}^{m} Q ( \phi (u) )  u^{r-1} du - \int_{1}^{m}\phi (u) u^{r-1} du.
\end{equation}

Moreover, using the change of variable $u=t/m$ and the relations $G( \phi (t/m) )  = \phi (t )$  and $\gamma m^r=1$, we get
\begin{eqnarray}
\label{eq rel2}
\frac{1}{\gamma}  \int_{0}^{1} G( \phi (u) )  u^{r-1} du = \frac{m^{-r}}{\gamma}  \int_{0}^{m} \phi (t)  t^{r-1} dt = \int_{0}^{m} \phi (t)  t^{r-1} dt. \notag
\end{eqnarray}
Therefore, since $\bar G (u) = G(u) - \gamma u$, we obtain
\begin{eqnarray}
\label{eq relation 2}
\frac{1}{\gamma}    \int_{0}^{1} \bar G( \phi (u) )  u^{r-1} du  
&=& \frac{1}{\gamma} \int_{0}^{1} G (\phi (u) ) u^{r-1} du -  \int_{0}^{1} \phi (u) u^{r-1} du  \notag\\
&=& \int_{0}^{m} \phi (u)  u^{r-1} du -  \int_{0}^{1} \phi (u) u^{r-1} du \notag\\
&=& \int_{1}^{m} \phi (u)  u^{r-1} du .
\end{eqnarray}
Finally, using  \eqref{eq relation 1}, \eqref{eq relation 2} and the additive property of integration, we obtain
\begin{equation}
\label{eq egalite}
\frac{1}{\gamma}  \int_{0}^{\infty} \bar{G}( \phi (u) )  u^{r-1} du = \int_{1}^{m} Q ( \phi (u) )  u^{r-1} du,
\end{equation}
which ends the proof of  \eqref{eq egalite intro}.
\end{proof}

\subsection{Proof of Proposition \ref{prop relation G Q psi BPRE}}
\label{subsec preuve prop}
Let $k \geq 1$. For convenience, let $r=r_k$. Using the additive property of integration, the change of variable $u=t\Pi_j^k$ for $j \geq 0$ and Fubini's theorem, we have 
\begin{eqnarray}
\label{eq rel1 BPRE}
\frac{1}{\gamma_k}  \e^{(r)} \left[ \int_{1}^{\infty} \bar{G}_{k,1}( \phi_{\xi} (u) )  u^{r-1} du \right] &=& \frac{1}{\gamma_k}  \e^{(r)} \left[ \sum_{j=0}^{\infty} \int_{\Pi_j}^{\Pi_{j+1}} \bar{G}_{k,1}( \phi_{\xi} (u) )  u^{r-1} du \right] \notag \\
&=& \frac{1}{\gamma_k}  \e^{(r)} \left[ \sum_{j=0}^{\infty} \int_{1}^{m_{j}} \bar{G}_{k,1}( \phi_{\xi} (t \Pi_j) )  \Pi_j^{r} t^{r-1} dt \right] \notag \\
&=& \frac{1}{\gamma_k}   \sum_{j=0}^{\infty} \e^{(r)} \left[ \int_{1}^{m_{j}} \bar{G}_{k,1}( \phi_{\xi} (t \Pi_j) )  \Pi_j^{r} t^{r-1} dt \right] .
\end{eqnarray}
Recall that $\phi_{\xi} (t \Pi_j ) = g_j (\phi_{T^{j} \xi} ( t ) )$, where $g_j (t) = f_0 \circ \ldots \circ f_{j-1} (t)$ is a random function depending on the environment $\xi_0, \ldots, \xi_{j-1}$ and  $T^{j} \xi = (\xi_{j}, \xi_{j+1}, \ldots )$. 
Then, using the change of measure \eqref{changement de mesure}, the independence of the environment sequence $(\xi_i) $  and Fubini's theorem, we see that
\begin{equation}
\label{eq rel2 BPRE}
\e^{(r)} \left[ \int_{1}^{m_{j}} \bar{G}_{k,1}( \phi_{\xi}(t \Pi_j) )  \Pi_j^{r} t^{r-1} dt \right] 
= \e^{(r)} \left[ \int_{1}^{m_{j}} c_r^{-j} \e_{T^{j} \xi} \left[ \bar{G}_{k,1}( g_j (\phi_{T^{j} \xi} ( t ) )  \right]  t^{r-1} dt \right]. 
\end{equation}
Using the fact that $\bar G_{k,1} (t) = G_{k,1} (t) - \gamma_k t^k$ and the relations $\e [ G_{k,n} ( g_j (t) ) ] = G_{k,n+j} (t)$ and $\e [ g_j^k (t) ] = G_{k,j} (t)$, we get
\begin{eqnarray}
\label{eq rel3 BPRE}
&& \e^{(r)} \left[ \int_{1}^{m_{j}} c_r^{-j} \e_{T^j \xi} \left[ \bar{G}_{k,1}( g_j (\phi_{T^{j} \xi} ( t ) )  \right]  t^{r-1} dt \right] \notag \\
&=& \e^{(r)} \left[ \int_{1}^{m_{j}} c_r^{-j} \left[ {G}_{k,j+1}(\phi_{T^{j} \xi} ( t ) ) -{G}_{k,j}(\phi_{T^{j} \xi} ( t ) ) \right]  t^{r-1} dt \right]. 
\end{eqnarray}
Moreover, since the environment sequence $(\xi_0, \xi_1, \ldots )$ is i.i.d., we obtain, for any $j \geq 0$,
\begin{eqnarray}
\label{eq rel4 BPRE}
&&\e^{(r)} \left[ \int_{1}^{m_{j}} c_r^{-j} \left[ {G}_{k,j+1}(\phi_{T^j \xi} ( t ) ) -{G}_{k,j}(\phi_{ T^j \xi} ( t ) ) \right]  t^{r-1} dt \right] \notag \\
&=& \e^{(r)} \left[ \int_{1}^{m_{0}} c_r^{-j} \left[ {G}_{k,j+1}(\phi_{\xi} ( t ) ) -{G}_{k,j}(\phi_{\xi} ( t ) ) \right]  t^{r-1} dt \right]. 
\end{eqnarray}
Therefore, coming back to \eqref{eq rel1 BPRE} and using the fact that $c_r = \gamma_k$ (for $r = r_k$) together with Fubini's theorem, we obtain
\begin{equation}
\label{eq rel5 BPRE}
\frac{1}{\gamma_k}  \e^{(r)} \left[ \int_{1}^{\infty} \bar{G}_{k,1}( \phi_{\xi} (u) )  u^{r-1} du \right] 
= \e^{(r)} \left[ \int_{1}^{m_{0}} \sum_{j=0}^{\infty} \frac{{G}_{k,j+1}(\phi_{\xi} ( t ) )}{\gamma_k^{j+1}} - \frac{ {G}_{k,j}(\phi_{\xi} ( t ) )}{\gamma_k^j}  t^{r-1} dt \right]. 
\end{equation}
Using  a telescoping argument and the assertion that $ \lim_{j \to \infty} \gamma_{k}^j G_{k,j} (t)= Q_k (t)$ for all $t \in [0,1)$, we get 
\begin{eqnarray}
\label{002}
\sum_{j=0}^{\infty} \left[ \frac{{G}_{k,j+1}(\phi_{\xi} ( t ) )}{\gamma_k^{j+1}} - \frac{ {G}_{k,j}(\phi_{\xi} ( t ) )}{\gamma_k^j} \right] &=& Q_k ( \phi_{\xi} (t) ) - G_{k,0} (\phi_{\xi} (t)) \notag \\
&=& Q_k ( \phi_{\xi} (t) ) - \phi_{\xi}^k (t).
\end{eqnarray}
Therefore, by \eqref{eq rel5 BPRE} and \eqref{002}, we have
\begin{eqnarray}
\label{eq rel6 BPRE}
&&\frac{1}{\gamma_k}  \e^{(r)} \left[ \int_{1}^{\infty} \bar{G}_{k,1}( \phi_{\xi} (u) )  u^{r-1} du \right]  \notag\\
&=& \e^{(r)} \left[ \int_{1}^{m_{0}} Q_k (\phi_{\xi} (t) ) t^{r-1} dt \right] - \e^{(r)} \left[ \int_{1}^{m_{0}} \phi_{\xi}^k (t) t^{r-1} dt \right].
\end{eqnarray}

Moreover, using the identity $\phi_{\xi} (u) = f_0 \left( \phi_{T \xi } ( t/ m_0) \right) $, the change of variable $t=u/m_0$, the independence between $\xi_0$ and $T \xi$, the relation $\gamma_k = c_r$ and Fubini's theorem, we get
\begin{eqnarray}
\label{eq rel7 BPRE}
\e^{(r)} \left[ \int_0^{m_0} \phi_{\xi}^k (t) t^{r-1} dt \right] 
&=&  \e^{(r)} \left[ \int_0^{1} f_0^k \left( \phi_{T \xi} (u) \right) m_0^{r} u^{r-1} du \right] \notag \\ 
&=&  \e^{(r)} \left[ \int_0^{1} \e_{T \xi}^{(r)} \left[ f_0^k \left( \phi_{T \xi} (u) \right) m_0^{r} \right] u^{r-1} du \right] \notag \\ 
&=&  \e^{(r)} \left[ \int_0^{1} c_r^{-1} G_{k,1} \left( \phi_{T \xi} (u) \right)  u^{r-1} du \right] \notag \\
&=& \gamma_k^{-1} \e^{(r)} \left[ \int_0^{1} G_{k,1} \left( \phi_{\xi} (u) \right)  u^{r-1} du \right] .
\end{eqnarray}
Therefore, from the identity $\bar{G}_{k,1}(t) = G_{k,1} (t) - \gamma_k t^k$ and \eqref{eq rel7 BPRE}, it follows that
\begin{eqnarray}
\label{eq rel8 BPRE}
\frac{1}{\gamma_k}  \e^{(r)} \left[ \int_{0}^{1} \bar{G}_{k,1}( \phi_{\xi} (u) )  u^{r-1} du \right]  
&=& \frac{1}{\gamma_k}\e^{(r)} \left[ \int_{0}^{1} G_{k,1} (\phi_{\xi} (t) ) t^{r-1} dt \right] - \e^{(r)} \left[ \int_{0}^{1} \phi_{\xi}^k (t) t^{r-1} dt \right] \notag\\
&=& \e^{(r)} \left[ \int_0^{m_0} \phi_{\xi}^k (t) t^{r-1} dt \right]  - \e^{(r)} \left[ \int_{0}^{1} \phi_{\xi}^k (t) t^{r-1} dt \right] \notag\\
&=& \e^{(r)} \left[ \int_{1}^{m_{0}} \phi_{\xi}^k (t) t^{r-1} dt \right]. 
\end{eqnarray}
Finally, using  \eqref{eq rel6 BPRE} and \eqref{eq rel8 BPRE}, we obtain
\begin{equation}
\label{eq egalite}
\frac{1}{\gamma_k} \e^{(r)} \left[  \int_{0}^{\infty} \bar{G}_{k,1} ( \phi_{\xi} (u) )  u^{r-1} du \right] =  \e^{(r)} \left[ \int_{1}^{m_0} Q_k ( \phi_{\xi} (u) )  u^{r-1} du \right],
\end{equation}
which ends the proof of Proposition \ref{prop relation G Q psi BPRE}.

\section{Applications}
\label{sec application}
In this section we present the proofs of Theorems \ref{thm TCL W-Wn}, \ref{thm LDR Rn 1} and \ref{thm LDR Rn 2}  as applications of Theorem \ref{thm harmonic moments Zn}.
In Section \ref{subsec W-Wn} we give the rate of convergence in the central limit theorem for $W-W_n$ where we prove Theorem \ref{thm TCL W-Wn}. In Section  \ref{subsec LD Rn} we deal with the large deviation results for the ratio $R_n=Z_{n+1}/Z_n$, where we prove Theorems \ref{thm LDR Rn 1} and \ref{thm LDR Rn 2}.

\subsection{Central Limit Theorem for $W-W_n$}
\label{subsec W-Wn}

In this section we prove Theorem \ref{thm TCL W-Wn}.

\begin{proof}[Proof of Theorem \ref{thm TCL W-Wn}]
It is well known that $W$ admits the following decomposition:
\[ \Pi_n ( W-W_n) = \sum_{i=1}^{Z_n} \left( W(i) -1 \right),\]
where under $\mathbb{P}_{\xi}$, the random variables $ W(i) \left( i \geq 1 \right) $ are independent of each other and independent of $Z_n$, with common distribution $\mathbb{P}_{\xi} \left( W(i) \in \cdot \right) = \mathbb{P}_{T^n \xi} \left( W  \in \cdot \right) $. Let
\[
\delta_{\infty}^2 (\xi) = \sum_{n=0}^{\infty} \frac{1}{\Pi_n} \left( \frac{m_n (2)}{m_n^2}-1 \right).
\]
The r.v.\ $\delta_{\infty}^2 (\xi)$ is the variance of $W$ under $\mathbb{P}_{\xi}$ (see e.g.\ \cite{hambly}). 
Notice that if $c_0 := \text{essinf} \ \frac{m_0 (2)}{m_0^{2}} >1,$ then $ \delta_{\infty}^2 (\xi)  \geq c_0 -1 > 0$. Therefore, condition $\mathbb{E} Z_1^{2+ \varepsilon} < \infty $ implies that, for all $k \geq 1$, it holds $ \mathbb{E}_k \left| \frac{W-1}{\delta_{\infty}} \right|^{2+ \varepsilon} \leq \frac{C}{c_0-1} \mathbb{E}_k \left| W-1 \right|^{2+ \varepsilon} < \infty$ (see \cite{guivarchliu}).
By the Berry-Esseen theorem, we have for all $x \in \mathbb{R}$,
\[ \left| \mathbb{P}_{\xi} \left( \frac{ \Pi_n ( W-W_n)}{\sqrt{Z_n} \delta_{\infty} (T^n \xi )} \leq x  \right) - \Phi (x) \right| \leq C \mathbb{E}_{T^n \xi } \left| \frac{W-1}{\delta_{\infty}} \right|^{2+ \varepsilon} \mathbb{E}_{\xi} \left[ Z_n^{- \varepsilon /2}  \right]. \]
Taking expectation with $Z_0=k$ and using Theorem \ref{thm harmonic moments Zn}, we get
\begin{eqnarray} 
\label{5.2}
\left| \mathbb{P}_k \left( \frac{ \Pi_n ( W-W_n)}{\sqrt{Z_n} \delta_{\infty} (T^n \xi )} \leq x  \right) - \Phi (x) \right| 
&\leq& C \ \mathbb{E}_{k} \left| \frac{W-1}{\delta_{\infty}} \right|^{2+ \varepsilon} \mathbb{E}_k \left[ Z_n^{- \varepsilon /2} \right] \notag \\
&\leq& C A_{k,n} (- \varepsilon /2).
\end{eqnarray}

\end{proof}

\subsection{Large deviation rate for $R_n$}
\label{subsec LD Rn}

This section is devoted to the proof of Theorems \ref{thm LDR Rn 1} and \ref{thm LDR Rn 2}. 
Recall that $ M_{n,j}$ is defined by \eqref{Mnj-001}, where $N_{n,i} $ are i.i.d. with generating $f_n$, given the environment $\xi$ (see Section \ref{sec main results}).
\begin{proof}[Proof of Theorem \ref{thm LDR Rn 1}]
Since for all $n \in \mathbb{N}$, $M_{n,j} - m_n$ has the same law  as $M_{0,j} - m_0$, and is independent of $Z_n$, we obtain
\begin{eqnarray*}
\mathbb{P}_k (R_n - m_n \in D) &=& \sum_{j \geq k} \mathbb{P} ( M_{n,j} - m_n \in D ) \mathbb{P}_k (Z_n =j) \\                      
                      &\leq& \sum_{j \geq k} \frac{C_1}{j^r} \mathbb{P}_k (Z_n =j) \\
                      &=& C_1 \e_k \left[ Z_n^{-r} \right].
\end{eqnarray*}
The result (\ref{majoration LGN MA}) follows from Theorem \ref{thm harmonic moments Zn}, and (\ref{minoration LGN MA}) follows similarly.
\end{proof}
\begin{proof}[Proof of Theorem \ref{thm LDR Rn 2}]
We stat with a lemma which is a direct consequence of the Marcinkiewicz-Zygmund inequality (see \cite[p. 356]{chow2012probability}).
\begin{lemma}[\cite{liu2001local}, Lemma 1.4] \label{lemma MZ}
Let $(X_i)_{i \geq 1}$ be a sequence of i.i.d.\ centered r.v.'s. Then we have for $p\in (1, \infty)$,
\begin{equation} \label{MZ inequality}
\mathbb{E} \left| \sum_{i=1}^{n} X_i \right|^p \leq
\left\{
\begin{array}{ll}
(B_p)^p \mathbb{E} \left( | X_i |^p \right) n , & \text{if} \  \ 1 < p \leq 2, \\
(B_p)^p \mathbb{E} \left( | X_i |^p \right) n^{p/2} , & \text{if }\ \  p>2,%
\end{array}%
\right.
\end{equation}
where $B_p = 2 \min \left\{ k^{1/2} : k \in \mathbb{N}, k \geq p/2 \right\}$ is a constant depending only on  $p$  (so that $B_p =2 $ if $1<p \leq 2$).
\end{lemma}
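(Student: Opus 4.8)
The plan is to deduce the inequality \eqref{MZ inequality} from the classical Marcinkiewicz--Zygmund inequality in its ``square function'' form, and then to cash that in against the i.i.d.\ assumption, keeping careful track of the constants so as to recover the explicit value of $B_p$. Throughout I write $S_n=\sum_{i=1}^n X_i$ and $N=\min\{k\in\mathbb{N}:k\geq p/2\}$, so that $2N\geq p$ and $B_p=2\sqrt{N}$.

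\emph{Step 1: the square-function bound $\mathbb{E}\,|S_n|^p\leq B_p^p\,\mathbb{E}\big(\sum_{i=1}^n X_i^2\big)^{p/2}$, which is the substantial input when $p>2$.} I would prove it by symmetrisation and Khintchine's inequality. Let $(X_i')$ be an independent copy of $(X_i)$ with partial sum $S_n'$; since $\mathbb{E}[S_n'\mid(X_i)]=0$, Jensen's inequality gives $\mathbb{E}\,|S_n|^p\leq\mathbb{E}\,|S_n-S_n'|^p$. The summands $Y_i=X_i-X_i'$ are symmetric and independent, hence $(Y_i)$ has the same joint law as $(\varepsilon_i Y_i)$ for an independent Rademacher sequence $(\varepsilon_i)$; conditioning on $(Y_i)$ and applying Khintchine's inequality yields $\mathbb{E}\,|S_n-S_n'|^p\leq K_p^p\,\mathbb{E}\big(\sum_i Y_i^2\big)^{p/2}$, where $K_p$ is the $L^p$ Khintchine constant. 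For $K_p$ I would use the estimate $K_p\leq\sqrt{N}$, obtained by interpolating $\|\cdot\|_p$ between $\|\cdot\|_2$ and $\|\cdot\|_{2N}$, by the fact that a Rademacher sum is dominated, moment by moment in its $2N$-th moment, by the corresponding Gaussian sum (so that $\mathbb{E}_\varepsilon\big|\sum_i\varepsilon_i a_i\big|^{2N}\leq(2N-1)!!\,\big(\sum_i a_i^2\big)^{N}$), combined with the AM--GM bound $(2N-1)!!=\prod_{j=1}^N(2j-1)\leq N^N$. Finally $\sum_i Y_i^2\leq 2\sum_i X_i^2+2\sum_i (X_i')^2$, so convexity of $t\mapsto t^{p/2}$ and $\mathbb{E}\big(\sum_i X_i^2\big)^{p/2}=\mathbb{E}\big(\sum_i (X_i')^2\big)^{p/2}$ give $\mathbb{E}\big(\sum_i Y_i^2\big)^{p/2}\leq 2^p\,\mathbb{E}\big(\sum_i X_i^2\big)^{p/2}$; collecting the constants produces exactly $(2\sqrt{N})^p=B_p^p$. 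For $1<p\leq 2$ this symmetrisation route is slightly wasteful in the constant, so there I would instead invoke the von Bahr--Esseen inequality $\mathbb{E}\,|S_n|^p\leq 2\sum_{i=1}^n\mathbb{E}\,|X_i|^p$, proved by induction on $n$ from an elementary inequality for $|a+b|^p$ and the centering hypothesis; since $B_p=2$ here and $2\leq 2^p$, this already yields the conclusion, so the square-function bound is genuinely only needed for $p>2$, where $N\geq 2$.

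\emph{Step 2: the i.i.d.\ reduction.} It then remains to bound $\mathbb{E}\big(\sum_{i=1}^n X_i^2\big)^{p/2}$ using that the $X_i$ have a common law. For $1<p\leq 2$ the map $t\mapsto t^{p/2}$ is concave, hence subadditive, on $[0,\infty)$, so $\big(\sum_i X_i^2\big)^{p/2}\leq\sum_i|X_i|^p$ and $\mathbb{E}\big(\sum_i X_i^2\big)^{p/2}\leq n\,\mathbb{E}\,|X_1|^p$. For $p>2$ one has $p/2>1$, and Minkowski's inequality in $L^{p/2}(\mathbb{P})$ gives $\big\|\sum_i X_i^2\big\|_{p/2}\leq\sum_i\big\|X_i^2\big\|_{p/2}=n\,(\mathbb{E}\,|X_1|^p)^{2/p}$, i.e.\ $\mathbb{E}\big(\sum_i X_i^2\big)^{p/2}\leq n^{p/2}\,\mathbb{E}\,|X_1|^p$. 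Combining this with Step 1 yields \eqref{MZ inequality}.

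\emph{Main difficulty.} The routine content is that \emph{some} constant depending only on $p$ works; the delicate point is to recover precisely $B_p=2\min\{k^{1/2}:k\in\mathbb{N},\ k\geq p/2\}$. This is what forces the case split (von Bahr--Esseen for $1<p\leq 2$, symmetrisation plus Khintchine for $p>2$) and, in the latter case, requires the sufficiently sharp bookkeeping $K_p\leq\sqrt{N}$ via the Gaussian-domination/even-moment estimate $(2N-1)!!\leq N^N$, together with the factor $2^p$ lost when passing from $\sum_i(X_i-X_i')^2$ back to $\sum_i X_i^2$.
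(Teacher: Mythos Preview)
Your proof is correct and complete, with accurate constant tracking in both regimes. However, the paper does not actually prove this lemma: it merely states it as a citation from \cite{liu2001local}, remarking that it is ``a direct consequence of the Marcinkiewicz--Zygmund inequality (see \cite[p.~356]{chow2012probability}),'' and uses it as a black box. So there is no proof in the paper to compare against; what you have supplied is a self-contained derivation that the paper deliberately omits.

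That said, your argument is exactly the kind of derivation the paper's remark points to: the Marcinkiewicz--Zygmund square-function inequality is the core input, and the i.i.d.\ reduction via subadditivity (for $1<p\le2$) or Minkowski (for $p>2$) is the natural way to extract the $n$ and $n^{p/2}$ factors. Your decision to replace the square-function route by von Bahr--Esseen in the range $1<p\le2$ is a clean way to land on $B_p=2$ without waste; for $p>2$ the symmetrisation--Khintchine approach with the bound $K_p\le\sqrt{N}$ via $(2N-1)!!\le N^N$ recovers $B_p=2\sqrt{N}$ exactly as stated. Nothing is missing.
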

We shall prove Theorem \ref{thm LDR Rn 2} in the case when $p \in (1, 2]$. Using the fact that $M_{n,j} - m_n$ has the same law  as $M_{0,j} - m_0$ and is independent of $Z_n$, we obtain after conditioning
\begin{equation}
\label{lien Rn Yn Zn}
\p_k ( | R_n -m_n| > a) = \sum_{j = k}^{\infty} \p ( | M_{0,j}-m_0 | > a ) \p_k ( Z_n = j) .
\end{equation}
Using \eqref{Mnj-001} and the fact that, under $\p_{\xi}$,  the r.v.'s $N_{0,i}-m_0 \ (i=1, \ldots , j)$ are i.i.d.\ centered and with generating function $f_0$, we get from Lemma \ref{lemma MZ} that, for $p \in (1,2]$,
\begin{eqnarray*}
\p_{\xi} ( | M_{0,j}-m_0 | > a ) &\leq & a^{-p} \e_{\xi} \left| M_{0,j} - m_0 \right|^p \\
								 &\leq & \left( \frac{B_p}{a} \right)^p j^{1-p}\e_{\xi} | Z_1 - m_0 |^p .
\end{eqnarray*}
Taking expectation, we obtain
\begin{eqnarray*}
\p( | M_{n,j}-m_n | > a ) &\leq & \left( \frac{B_p}{a} \right)^p j^{1-p} \e | Z_{1} - m_0 |^p .
\end{eqnarray*}
Therefore, coming back to \eqref{lien Rn Yn Zn} and applying Theorem \ref{thm harmonic moments Zn}, we get 
\begin{eqnarray*}
\p_k ( | R_n -m_n| > a) &\leq & \left( \frac{B_p}{a} \right)^p \e | Z_{1} - m_0 |^p \sum_{j = k}^{\infty} j^{1-p} \p_k ( Z_n = j) \\
&=& \left( \frac{B_p}{a} \right)^p \e | Z_{1} - m_0 |^p \e_k \left[ Z_n^{1-p} \right] \\
&=& C_p a^{-p} A_{k,n} (p-1),
\end{eqnarray*}
with $C_p = B_p \e | Z_{1} - m_0 |^p$. This  ends the proof of Theorem \ref{thm LDR Rn 2} in the case when $p \in (1,2]$. The proof in the case $p >2$ is obtained in the same way.
\end{proof}

\bibliographystyle{plain}
\bibliography{biblio}

\begin{thebibliography}{10}

\bibitem{afanasyev2012limit}
V.~I. Afanasyev, C.~B{\"o}inghoff, G.~Kersting, and V.~A. Vatutin.
\newblock Limit theorems for weakly subcritical branching processes in random
  environment.
\newblock {\em J. Theoret. Probab.}, 25(3):703--732, 2012.

\bibitem{afanasyev2014conditional}
V.~I. Afanasyev, C.~B{\"o}inghoff, G.~Kersting, and V.~A. Vatutin.
\newblock Conditional limit theorems for intermediately subcritical branching
  processes in random environment.
\newblock {\em Ann. Inst. Henri Poincar\'e Probab. Stat.}, 50(2):602--627,
  2014.

\bibitem{athreya1971branching2}
K.~B. Athreya and S.~Karlin.
\newblock Branching processes with random environments: {II}: Limit theorems.
\newblock {\em Ann. Math. Stat.}, 42(6):1843--1858, 1971.

\bibitem{athreya1971branching}
K.~B. Athreya and S.~Karlin.
\newblock On branching processes with random environments: I: Extinction
  probabilities.
\newblock {\em Ann. Math. Stat.}, 42(5):1499--1520, 1971.

\bibitem{athreya}
K.~B. Athreya and P.~E. Ney.
\newblock {\em Branching processes}, volume~28.
\newblock Springer-Verlag Berlin, 1972.

\bibitem{bansaye2009large}
V.~Bansaye and J.~Berestycki.
\newblock Large deviations for branching processes in random environment.
\newblock {\em Markov Process. Related Fields}, 15(4):493--524, 2009.

\bibitem{bansaye2011upper}
V.~Bansaye and C.~B{\"o}inghoff.
\newblock Upper large deviations for branching processes in random environment
  with heavy tails.
\newblock {\em Electron. J. Probab.}, 16(69):1900--1933, 2011.

\bibitem{bansaye2013lower}
V.~Bansaye and C.~B{\"o}inghoff.
\newblock Lower large deviations for supercritical branching processes in
  random environment.
\newblock {\em Proc. Steklov Inst. Math.}, 282(1):15--34, 2013.

\bibitem{bansaye2014small}
V.~Bansaye and C.~B{\"o}inghoff.
\newblock Small positive values for supercritical branching processes in random
  environment.
\newblock {\em Ann. Inst. Henri Poincar\'e Probab. Stat.}, 50(3):770--805,
  2014.

\bibitem{boinghoff2014limit}
C.~B{\"o}inghoff.
\newblock Limit theorems for strongly and intermediately supercritical
  branching processes in random environment with linear fractional offspring
  distributions.
\newblock {\em Stoch. Process. Appl.}, 124(11):3553--3577, 2014.

\bibitem{boinghoff2010upper}
C.~B{\"o}inghoff and G.~Kersting.
\newblock Upper large deviations of branching processes in a random environment
  - offspring distributions with geometrically bounded tails.
\newblock {\em Stoch. Process. Appl.}, 120(10):2064--2077, 2010.

\bibitem{chow2012probability}
Y.~S. Chow and H.~Teicher.
\newblock {\em Probability theory: independence, interchangeability,
  martingales}.
\newblock Springer Science \& Business Media, 2012.

\bibitem{dembo1998large}
A.~Dembo and O.~Zeitouni.
\newblock {\em Large deviations techniques and applications}, volume~2.
\newblock Springer, 1998.

\bibitem{glm2016asymptotic}
I.~Grama, Q.~Liu, and E.~Miqueu.
\newblock Asymptotic of the distribution and harmonic moments for a
  supercritical branching process in a random environment.
\newblock {\em arXiv preprint arXiv:1606.04228}, 2016.

\bibitem{guivarchliu}
Y.~Guivarc'h and Q.~Liu.
\newblock Propri{\'e}t{\'e}s asymptotiques des processus de branchement en
  environnement al{\'e}atoire.
\newblock {\em Comptes Rendus de l'Acad{\'e}mie des Sciences-Series
  I-Mathematics}, 332(4):339--344, 2001.

\bibitem{hambly}
B.~Hambly.
\newblock On the limiting distribution of a supercritical branching process in
  a random environment.
\newblock {\em J. Appl. Probab.}, 29(3):499--518, 1992.

\bibitem{liu}
C.~Huang and Q.~Liu.
\newblock Moments, moderate and large deviations for a branching process in a
  random environment.
\newblock {\em Stoch. Process. Appl.}, 122(2):522--545, 2012.

\bibitem{huang_convergence_2014}
C.~Huang and Q.~Liu.
\newblock Convergence in ${L_p}$ and its exponential rate for a branching
  process in a random environment.
\newblock {\em Electron. J. Probab.}, 19(104):1--22, 2014.

\bibitem{kozlov2006large}
M.~V. Kozlov.
\newblock On large deviations of branching processes in a random environment:
  geometric distribution of descendants.
\newblock {\em Discrete Math. Appl.}, 16(2):155--174, 2006.

\bibitem{kozlov2010large}
M.~V. Kozlov.
\newblock On large deviations of strictly subcritical branching processes in a
  random environment with geometric distribution of progeny.
\newblock {\em Theory Probab. Appl.}, 54(3):424--446, 2010.

\bibitem{liu2001local}
Q.~Liu.
\newblock Local dimensions of the branching measure on a {G}alton--{W}atson
  tree.
\newblock {\em Ann. Inst. Henri Poincar\'e Probab. Stat.}, 37(2):195--222,
  2001.

\bibitem{Nakashima2013lower}
M.~Nakashima.
\newblock Lower deviations of branching processes in random environment with
  geometrical offspring distributions.
\newblock {\em Stoch. Process. Appl.}, 123(9):3560--3587, 2013.

\bibitem{ney2003harmonic}
P.~E. Ney and A.~N. Vidyashankar.
\newblock Harmonic moments and large deviation rates for supercritical
  branching processes.
\newblock {\em Ann. Appl. Probab.}, pages 475--489, 2003.

\bibitem{smith}
W.~L. Smith and W.~E. Wilkinson.
\newblock On branching processes in random environments.
\newblock {\em Ann. Math. Stat.}, 40(3):814--827, 1969.

\bibitem{tanny1988necessary}
D.~Tanny.
\newblock A necessary and sufficient condition for a branching process in a
  random environment to grow like the product of its means.
\newblock {\em Stoch. Process. Appl.}, 28(1):123--139, 1988.

\bibitem{Va2010}
V.~A. Vatutin.
\newblock A refinement of limit theorems for the critical branching processes
  in random environment.
\newblock In {\em Workshop on Branching Processes and their Applications. Lect.
  Notes Stat. Proc.}, volume 197, pages 3--19. Springer, Berlin, 2010.

\bibitem{VaZheng2012}
V.~A. Vatutin and X.~Zheng.
\newblock Subcritical branching processes in a random environment without the
  {C}ramer condition.
\newblock {\em Stoch. Process. Appl.}, 122(7):2594--2609, 2012.

\end{thebibliography}

\nocite{smith}

\nocite{athreya}

\nocite{athreya1971branching}

\nocite{athreya1971branching2}

\nocite{liu}

\nocite{huang_convergence_2014}

\nocite{bansaye2009large}

\nocite{boinghoff2010upper}

\nocite{bansaye2011upper}

\nocite{bansaye2012lower}

\nocite{kozlov2010large}

\nocite{kozlov2006large}

\nocite{bansaye2014small}

\end{document}